\newtheorem{theorem}{Theorem}[section]
\newtheorem{lemma}[theorem]{Lemma}
\newtheorem{proposition}[theorem]{Proposition}
\newtheorem{corollary}[theorem]{Corollary}
\theoremstyle{definition}
\newtheorem{question}[theorem]{Question}
\newtheorem{conjecture}[theorem]{Conjecture}
\newtheorem{remark}[theorem]{Remark}
\newcommand{\C}{\mathbb C}
\begin{document}
\title[On universal Lie nilpotent associative algebras]{On
universal Lie nilpotent associative algebras}
\author[Pavel Etingof]{Pavel Etingof}
\author[John Kim]{John Kim}
\author[Xiaoguang Ma]{Xiaoguang Ma}

\address{Department of Mathematics, Massachusetts Institute of Technology,
Cambridge, MA 02139, USA}
\email{etingof@math.mit.edu}

\address{Massachusetts Institute of Technology,
Cambridge, MA 02139, USA}
\email{kimjohn@mit.edu}

\address{Department of Mathematics, Massachusetts Institute of Technology,
Cambridge, MA 02139, USA}
\email{xma@math.mit.edu}


\maketitle

\section{Introduction}

Let $A$ be an associative unital algebra over a field $k$. Let us regard
it as a Lie algebra with bracket $[a,b]=ab-ba$, and consider the
terms of its lower central series $L_i(A)$ defined inductively by
$L_1(A)=A$ and $L_{i+1}(A)=[A,L_i(A)]$. Denote by $M_i(A)$ the
two-sided ideal in $A$ generated by $L_i(A)$: $M_i(A)=AL_i(A)A$,
and let $Q_i(A)=A/M_i(A)$. 
Thus $Q_i(A)$ is the largest quotient algebra of $A$ which
satisfies the higher commutator polynomial identity
$[\ldots[[a_1,a_2],a_3],\ldots,a_i]=0$. 

An algebra $A$ is said to be Lie nilpotent of index $i$ if
$M_{i+1}(A)=0$ (i.e. $A=Q_{i+1}(A)$). For example, Lie nilpotent algebras of
index $1$ are commutative algebras. Understanding Lie nilpotent
algebras of higher indices is an interesting open problem. Many
questions about Lie nilpotent algebras can be reduced to
understanding the structure of universal Lie nilpotent algebras,
i.e. algebras $Q_{n,i}:=Q_i(A_n)$, 
where $A_n$ is the free associative algebra in $n$ generators,
since any finitely generated Lie nilpotent algebra of index $i$
is a quotient of $Q_{n,i+1}$. 

The goal of this paper is to advance our understanding of the
algebras $Q_{n,i}$ for $i\ge 2$ (in characteristic zero). The structure  
of these algebras for general $i$ and $n$ is unknown. The only
algebras $Q_{n,i}$ whose structure has been known are $Q_{n,2}$,
which is easily seen to be isomorphic to the polynomial algebra 
$k[x_1,\ldots,x_n]$, and $Q_{n,3}$, which, according to Feigin and
Shoikhet, \cite{FS}, is isomorphic to the algebra 
of even polynomial differential forms in $n$ variables, 
$k[x_1,\ldots,x_n]\otimes \wedge^{\rm even}(dx_1,\ldots,dx_n)$, 
with product $a*b=ab+da\wedge db$. 

The main result of the paper is an explicit description     
of the algebra $Q_{n,4}$. We also derive some properties of the 
algebras $Q_{n,i}$ for $i>4$, and formulate some questions for
future study which appear interesting. 

{\bf Acknowledgements.} This work arose out of the undergraduate
research project of J.K. within the framework of the UROP program at
MIT. The work of P.E. was  partially supported
by the NSF grant DMS-0504847. We are very grateful to
B. Shoikhet, who suggested to us the problem to study $Q_{n,4}$ and 
proposed an approach to it; without his help this paper
would not have appeared. We are also grateful to M. Artin for
useful discussions, and to T. Schedler for help with 
computations using MAGMA (see the proof of Theorem \ref{special}).

\section{The associated graded algebra of $Q_{n,i}$ under the Lie
filtration}

Let $A_n$ be the free
algebra over $\Bbb C$ in $n$ generators $x_1,\ldots,x_n$ ($n\ge 2$). 
The algebra $A_n$ can be viewed as the universal enveloping
algebra $U(\ell_n)$ of the free Lie algebra $\ell_n$ 
in $n$ generators. Therefore, $A_n$ has an increasing
filtration (called the Lie filtration), 
defined by the condition that $\ell_n$ sits in degree
$1$, and the associated graded algebra ${\rm gr}A_n$ under this
filtration is the commutative algebra ${\rm Sym}\ell_n$. 

The algebra $Q_{n,i}$ is the quotient of $A_n=U(\ell_n)$ by the
ideal $M_{n,i}:=M_i(A_n)$. Hence, $Q_{n,i}$ inherits the Lie
filtration from $A_n$, and one can form the quotient algebra 
$D_{n,i}={\rm gr}Q_{n,i}={\rm Sym}\ell_n/{\rm gr}M_{n,i}$, which is
commutative. 

Let $\ell_n'=[\ell_n,\ell_n]$. Then we have a natural
factorization 
$$
{\rm Sym}\ell_n=\C[x_1,\ldots,x_n]\otimes {\rm Sym}\ell_n'.
$$ 

Let $\Lambda_{n,i}$ be the image of ${\rm Sym}\ell_n'$ in $D_{n,i}$. 
Then the multiplication map 
$$
\theta: \C[x_1,\ldots,x_n]\otimes
\Lambda_{n,i}\to D_{n,i}
$$
is surjective. 

\begin{theorem}\label{fid}
\begin{enumerate}
\item[(i)]$\Lambda_{n,i}$ is a finite dimensional 
algebra with a grading by nonnegative integers (defined by
setting ${\rm deg}x_i=1$), with $\Lambda_{n,i}[0]=k$. 
\item[(ii)]The map $\theta$ is an isomorphism. 
\end{enumerate}
\end{theorem}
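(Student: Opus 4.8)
The plan is to bring in the $n$ commuting derivations $\partial_1,\dots ,\partial_n$ of $A_n=U(\ell_n)$ defined by $\partial_k(x_j)=\delta_{kj}$ and to use them to pin down the $\C[x_1,\dots ,x_n]$-module structure of $D_{n,i}$. From this, part (ii) comes out almost for free (and, incidentally, without needing part (i)); the genuinely hard point is the finite-dimensionality in (i), which I will reduce to a nilpotency statement.

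For part (ii): since $\partial_k$ is a derivation of the associative product it is a derivation of the bracket, so one checks by induction that $\partial_k L_m(A_n)\subseteq L_m(A_n)$ for every $m$, hence $\partial_k M_{n,i}\subseteq M_{n,i}$; as $\partial_k$ also lowers both the Lie filtration and the weight grading ($\deg x_j=1$) by $1$, it descends to a degree $-1$ derivation of $D_{n,i}$. On $D_{n,i}$ these derivations commute, satisfy $[\partial_k,x_l]=\delta_{kl}$, and (since $D_{n,i}$ is nonnegatively graded by weight with finite-dimensional homogeneous pieces) act locally nilpotently, so $D_{n,i}$ is a graded module over the Weyl algebra on $x_1,\dots ,x_n,\partial_1,\dots ,\partial_n$. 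I would then invoke the standard Kashiwara-type fact that any such module $M$ is free over $\C[x_1,\dots ,x_n]$ via multiplication, i.e. $\C[x_1,\dots ,x_n]\otimes M^{\partial}\xrightarrow{\ \sim\ }M$ with $M^{\partial}=\bigcap_k\ker(\partial_k)$. Because $\partial_k$ kills $\ell_n'$ it kills the subalgebra ${\rm Sym}\,\ell_n'$ it generates, so $\Lambda_{n,i}\subseteq (D_{n,i})^{\partial}$; hence $\theta$ factors as $\C[x_1,\dots ,x_n]\otimes\Lambda_{n,i}\hookrightarrow \C[x_1,\dots ,x_n]\otimes (D_{n,i})^{\partial}\xrightarrow{\ \sim\ }D_{n,i}$, and surjectivity of $\theta$ forces the first map, hence the inclusion $\Lambda_{n,i}\subseteq(D_{n,i})^{\partial}$, to be an equality. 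Thus $\theta$ is an isomorphism and $D_{n,i}=\C[x_1,\dots ,x_n]\otimes\Lambda_{n,i}$.

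For part (i): the weight grading of $A_n$ descends to $D_{n,i}$ because each $F_dA_n$ is spanned by weight-homogeneous elements, and restricting to $\Lambda_{n,i}$ (the image of ${\rm Sym}\,\ell_n'$) gives a nonnegative grading whose degree-$0$ part is the image of $k\cdot 1$, which is nonzero since $Q_{n,i}$ surjects onto $Q_{n,2}=\C[x_1,\dots ,x_n]\neq 0$; this gives $\Lambda_{n,i}[0]=k$. For finite-dimensionality, note first that $\gamma_i(\ell_n)=\bigoplus_{w\ge i}\ell_n[w]$ lies in $L_i(A_n)\subseteq M_{n,i}$ and consists of symbol-degree-$1$ elements, so $\ell_n'[w]\mapsto 0$ in $D_{n,i}$ for $w\ge i$; hence $\Lambda_{n,i}$ is generated as an algebra by the finite-dimensional space $\bigoplus_{2\le w\le i-1}\ell_n'[w]$, and it only remains to show that its augmentation ideal $\Lambda_{n,i}[{>}0]$ is nilpotent. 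Using the isomorphism $D_{n,i}=\C[x_1,\dots ,x_n]\otimes\Lambda_{n,i}$ from part (ii) together with $Q_{n,i}/M_2(Q_{n,i})\cong Q_{n,2}$ (with its standard filtration), one identifies $\C[x_1,\dots ,x_n]\otimes\Lambda_{n,i}[{>}0]$ with $\gr$ of the commutator ideal $M_2(Q_{n,i})$; and since a proper filtered ideal is nilpotent if and only if its associated graded is (the Lie filtration being bounded below and exhaustive), $\Lambda_{n,i}[{>}0]$ is nilpotent if and only if $M_2(Q_{n,i})$ is.

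So everything reduces to: the commutator ideal of $Q_{n,i}$ is nilpotent — this is the main obstacle. I would derive it from the general facts that the commutator ideal of a finitely generated Lie nilpotent algebra over a field of characteristic zero is nil (a theorem of Jennings; alternatively one shows directly that every commutator is nilpotent modulo deeper commutators), and that a nil ideal in a finitely generated PI algebra is nilpotent (Braun), $Q_{n,i}$ being finitely generated and satisfying the identity $[\dots [[a_1,a_2],a_3],\dots ,a_i]=0$. A self-contained alternative is to induct on $i$: assuming $\Lambda_{n,j}$ finite-dimensional for $j<i$, it suffices to show the kernel of $\Lambda_{n,i}\to\Lambda_{n,i-1}$ is nilpotent, which one attacks via the identities $[L_p,L_q]\subseteq L_{p+q}$ and a careful reduction of products of iterated commutators modulo $M_{n,i}$. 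In any approach some such input is unavoidable, since no bound uniform in $n$ can hold — already for $i=3$ the nilpotency index of the commutator ideal of $Q_{n,3}=\C[x_1,\dots ,x_n]\otimes\wedge^{\rm even}(dx_1,\dots ,dx_n)$ grows linearly in $n$.
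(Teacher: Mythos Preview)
Your argument is correct and rests on the same two ideas as the paper's: the translation derivations $\partial_k$ for (ii), and Jennings' theorem for (i). The packaging differs in a few places. For (ii) the paper does the equivalent but more elementary thing of picking a minimal-degree relation $\sum_j f_j v_j=0$ and applying the automorphisms $x_s\mapsto x_s+t$ (then differentiating in $t$) to lower the degree; your Kashiwara-type formulation identifies the same mechanism more structurally. For (i) the paper is more direct: Jennings' 1947 result already gives that $M_2(A)$ is \emph{nilpotent} (not merely nil) for any finitely generated Lie nilpotent $A$, so the appeal to Braun is unnecessary; and one does not need to pass through (ii) or through $\gr M_2(Q_{n,i})$ at all, since nilpotency of $M_2(Q_{n,i})$ immediately forces any $N$-fold product of elements of $\ell_n'$ to vanish in $Q_{n,i}$, hence in $\Lambda_{n,i}$, which is generated by the finite-dimensional piece $\ell_n'[<i]$.
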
 

\begin{proof}
Statement (i) follows from the following theorem of Jennings: 

\begin{theorem}[\cite{Jen}, Theorem 2]
If $A$ is a finitely generated Lie nilpotent algebra, 
then $M_2(A)$ is a nilpotent ideal.
\end{theorem}

This implies that there exists $N$ such that for any 
$a_1,\ldots,a_N\in M_2(A)$, \linebreak$a_1a_2\cdots a_N=0$. 
Taking $A=Q_{n,i}$, we see that for any 
$a_1,\ldots,a_N\in \ell_n'$, we have $a_1a_2\cdots a_N=0$.
Since $\Lambda_{n,i}$ is generated by the subspace $\ell_n'[<i]$ of
$\ell_n'$ of degree $<i$,
this implies that $\Lambda_{n,i}$ is finite dimensional, 
proving (i). 

To prove (ii), let $v_j, j=1,\ldots,d$, be a basis of $\Lambda_{n,i}$, and 
assume the contrary, i.e. that we have a nontrivial relation in $D_{n,i}$:  
$$
\sum_{j=1}^d f_j(x_1,\ldots,x_n)v_j=0, 
$$
where $f_j\in \Bbb C[x_1,\ldots,x_n]$. Pick this relation so that the
maximal degree $D$ of $f_j$ is smallest possible.
This degree must be positive, since $v_j$ are linearly
independent over $\Bbb C$. Applying the automorphism $g_i^t$
($t\in \Bbb C$)
of $A_n$ acting by $g_i^t(x_i)=x_i+t$, $g_i^t(x_s)=x_s$, $s\ne i$, we get 
$$
\sum_{j=1}^d f_j(x_1,\ldots,x_s+t,\ldots,x_n)v_j=0. 
$$
Differentiating this by $t$, we get 
$$
\sum_{j=1}^d \partial_{x_s} f_j(x_1,\ldots,x_n)v_j=0. 
$$
This relation must be trivial, since it has smaller degree than
$D$. 
Thus $f_j$ must be constant, which is a contradiction. 
\end{proof} 

This shows that to understand the structure of the algebra 
$Q_{n,i}$, we need to first understand the structure of the commutative
finite dimensional algebra $\Lambda_{n,i}$, which gives rise to
the following question.  

\begin{question}\label{dime} 
What is the structure of $\Lambda_{n,i}$ as a $GL(n)$-module? 
\end{question}

The answer to Question \ref{dime} has been known only for $i=2$, in which
case $\Lambda_{n,i}=\Bbb C$, 
and for $i=3$, in which case 
it is shown in \cite{FS} that $\Lambda_{n,i}=
\wedge^{\rm even}(\xi_1,\ldots,\xi_n)$, and hence 
is the sum of irreducible representations of $GL(n)$
corresponding to the partitions $(1^{2r},0,\ldots,0)$, $0\le 2r\le
n$. 

In this paper, we answer Question \ref{dime} for $i=4$. For $i>4$, the
question remains open. 

\section{The multiplicative properties of the ideals $M_i(A)$.}

A step toward understanding of the structure of the algebras
$Q_i(A)$ is understanding of the multiplicative properties 
of the ideals $M_i(A)$. In 1983, Gupta and Levin proved the
following result in this direction. 

\begin{theorem}[\cite{GL}, Theorem 3.2]\label{GLthm}
For any $m,l \geq 2$ and any algebra $A$, we have 
$$M_m(A)\cdot M_l(A)\subset M_{m+l-2}(A).$$
\end{theorem}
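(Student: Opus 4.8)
The plan is to dispose of two essentially routine reductions, bringing the statement to a purely Lie‑theoretic vanishing assertion, and then to prove that assertion by induction; the heart of the matter — and the only real obstacle — is a commutator calculation inside that induction.

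\emph{Reductions.} Since any element of $M_m(A)M_l(A)$ involves only finitely many elements of $A$, and since algebra surjections carry each $L_i$ (hence each $M_i$) onto one another, it suffices to treat the free algebra $A=A_N$. Next I would use the identity $M_j(A)=L_j(A)\,A=A\,L_j(A)$: by induction on $j$ one has $L_{j+1}(A)=[A,L_j(A)]\subseteq[A,L_{j-1}(A)]=L_j(A)$, starting from $L_2(A)\subseteq A=L_1(A)$, so $ax=xa+[a,x]\in L_j(A)\,A$ for $a\in A$, $x\in L_j(A)$; hence $L_j(A)\,A$ is already two‑sided, and it is clearly the smallest ideal containing $L_j(A)$. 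Therefore
\[
M_m(A)M_l(A)=\bigl(L_m(A)A\bigr)\bigl(AL_l(A)\bigr)=L_m(A)\,A\,L_l(A)=L_m(A)\,L_l(A)\,A,
\]
so the theorem is equivalent to $L_m(A)L_l(A)\subseteq M_{m+l-2}(A)$. Applying this with $A$ replaced by $R:=A/M_{m+l-2}(A)$ (for which $L_{m+l-2}(R)=0$) shows it is enough to prove: \emph{if $R$ is any ring with $L_{m+l-2}(R)=0$, then $L_m(R)L_l(R)=0$.} In such an $R$ one has $L_k(R)=0$ for all $k\ge m+l-2$, so by the standard inclusion $[L_p,L_q]\subseteq L_{p+q}$ for a lower central series, $[L_p(R),L_q(R)]=0$ whenever $p+q\ge m+l-2$.

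\emph{The induction.} I would prove this claim by induction on $\min(m,l)$; say $m\le l$. If $m=2$ it is immediate, since $L_2(R)L_l(R)=L_2(R)\,L_{m+l-2}(R)=0$. If $m\ge 3$, write a generator of $L_m(R)$ as $u=[a,w]$ with $w\in L_{m-1}(R)$ and take $v\in L_l(R)$. By Leibniz, $uv=[a,wv]-w[a,v]$, and $w[a,v]\in L_{m-1}(R)L_{l+1}(R)=0$ by the inductive hypothesis (the index‑sum is unchanged and the minimum strictly drops). Hence $uv=[a,wv]$ with $wv\in L_{m-1}(R)L_l(R)$, so the problem collapses to showing $L_{m-1}(R)L_l(R)\subseteq Z(R)$.

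\emph{The obstacle.} This last step is essentially all the difficulty. One cannot finish with another round of Leibniz: expanding $[r,xy]=[r,x]y+x[r,y]$ feeds the products $L_m(R)L_l(R)$ and $L_{m-1}(R)L_{l+1}(R)$ back in, so the naive recursion is circular. The circle must be broken by a direct commutator computation exploiting $[L_p(R),L_q(R)]=0$ for $p+q\ge m+l-2$. The smallest nontrivial case $m=l=3$ (so $L_4(R)=0$, $L_3(R)\subseteq Z(R)$, $L_2(R)$ abelian) already displays the mechanism: writing $u=[P,c]$ with $P=[a,b]\in L_2(R)$ and $v=[P',f]$ with $P'=[d,e]\in L_2(R)$, centrality of $u$ gives $uv=[uP',f]$, and $[P,P']\in L_4(R)=0$ gives $uP'=[P,cP']$, whence $uv=\bigl[\,[P,cP'],f\,\bigr]\in[L_3(R),R]=L_4(R)=0$. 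For general $m,l$ one can no longer push a whole factor into the centre, and the argument must become an organized ``collection'' process: the long product $L_{m-1}(R)L_l(R)$ is rewritten step by step, with the order of the moves and the choice of induction parameter arranged so that each step genuinely descends, while the commutators accumulated in sliding the two long brackets past each other are annihilated using $[L_p(R),L_q(R)]=0$ for $p+q\ge m+l-2$. Organising this bookkeeping cleanly is the technical heart of Gupta and Levin's argument and the only serious obstacle.
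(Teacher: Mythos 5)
First, a point of reference: the paper does not prove this theorem at all — it is quoted from Gupta and Levin (\cite{GL}, Theorem 3.2) as an external input — so there is no internal proof to compare yours against; your attempt has to stand on its own. It does not. Your reductions are correct and standard: since $L_{j+1}(A)\subseteq L_j(A)$ one gets $M_j(A)=L_j(A)A=AL_j(A)$, hence the theorem is equivalent to the vanishing statement $L_m(R)L_l(R)=0$ in any $R$ with $L_{m+l-2}(R)=0$; the base case $m=2$ is trivial; and the Leibniz step $uv=[a,wv]-w[a,v]$, with $w[a,v]\in L_{m-1}(R)L_{l+1}(R)=0$ by the induction at fixed $m+l$ and smaller minimum, is fine. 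But at that point everything hinges on the claim $L_{m-1}(R)L_l(R)\subseteq Z(R)$, and you do not prove it: you verify only the single instance $m=l=3$ and then state that the general case is ``the technical heart of Gupta and Levin's argument and the only serious obstacle.'' As you yourself observe, $[r,wv]=[r,w]v+w[r,v]$ lands back in $L_m(R)L_l(R)+L_{m-1}(R)L_{l+1}(R)$, so the obvious recursion is circular, and no non-circular argument is supplied. That centrality claim is essentially the whole content of the theorem; what you have written is an accurate reduction of the problem to its hard kernel together with an acknowledgement that the kernel is missing. That is a genuine gap, not a presentational one.

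If you want to close it, the mechanism of your $m=l=3$ computation does point in the right direction, but it must be upgraded to a stated and proved collection lemma — for instance, iterating the exact identity $u[a,b]=[ua,b]-[u,b]a$ (whose second term lies in $M_{m+1}(A)$ when $u\in L_m(A)$) so as to absorb one long commutator into the other while controlling the error terms by the inductive hypothesis. Be warned that naive versions of this fail: for example the chain $L_mL_l\subseteq[R,M_{m+l-3}]\subseteq L_{m+l-3}(R)L_2(R)$ would require the pair $(m+l-3,2)$ to be null, which the paper shows is false for $(2,2)$ and $(2,4)$. Organizing the bookkeeping so that every step genuinely descends is exactly what Gupta and Levin's proof does, and it cannot be replaced by the sentence that it is hard.
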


\begin{corollary} The space $\overline{A}:=Q_3(A)\oplus\oplus_{i\ge 3}
M_i(A)/M_{i+1}(A)$ has a structure of a graded algebra, 
with $Q_3(A)$ sitting in degree zero, and $M_i(A)/M_{i+1}(A)$ 
in degree $i-2$ for $i\ge 3$. 
\end{corollary}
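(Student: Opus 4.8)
The plan is to deduce the graded-algebra structure directly from Theorem \ref{GLthm} (Gupta--Levin). First I would set $\overline{A}_0 := Q_3(A) = A/M_3(A)$ and $\overline{A}_{i-2} := M_i(A)/M_{i+1}(A)$ for $i \ge 3$, so that $\overline{A} = \bigoplus_{d \ge 0} \overline{A}_d$ as a graded vector space. The multiplication on $\overline{A}$ will be induced from that of $A$: given homogeneous components, I need a well-defined map $\overline{A}_d \times \overline{A}_e \to \overline{A}_{d+e}$. Concretely, for $i,j \ge 3$ I must check that multiplication $A \times A \to A$ descends to $M_i(A)/M_{i+1}(A) \times M_j(A)/M_{j+1}(A) \to M_{i+j-2}(A)/M_{i+j-1}(A)$, and similarly for the mixed products involving the degree-zero piece.

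The key computational step is the following chain of containments, all immediate consequences of Theorem \ref{GLthm} together with the obvious monotonicity $M_{k+1}(A) \subseteq M_k(A)$ (which holds because $L_{k+1}(A) = [A, L_k(A)] \subseteq L_k(A)$, hence $M_{k+1}(A) = A L_{k+1}(A) A \subseteq A L_k(A) A = M_k(A)$). For $i, j \ge 3$: the product $M_i(A) \cdot M_j(A) \subseteq M_{i+j-2}(A)$ by Theorem \ref{GLthm} (applicable since $i, j \ge 2$), so multiplication sends the pair into degree $i+j-4 = (i-2)+(j-2)$ — wait, one must be careful: the target piece is indexed by $i+j-2$, i.e. graded degree $(i+j-2)-2 = i+j-4$, which indeed equals $(i-2)+(j-2)$, confirming the grading is respected. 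Moreover $M_{i+1}(A) \cdot M_j(A) \subseteq M_{i+j-1}(A)$ and $M_i(A) \cdot M_{j+1}(A) \subseteq M_{i+j-1}(A)$, again by Theorem \ref{GLthm}, so the product is independent of the choice of representatives modulo $M_{i+1}(A)$ and $M_{j+1}(A)$ respectively; this is exactly the well-definedness of the induced map on quotients. For the mixed case, $M_i(A)$ with $i \ge 3$ paired against degree zero: here I use that $M_i(A)$ is a two-sided ideal of $A$, so $A \cdot M_i(A) \subseteq M_i(A)$ and $M_i(A) \cdot A \subseteq M_i(A)$; combined with $A \cdot M_{i+1}(A) \subseteq M_{i+1}(A)$ and $M_i(A) \cdot M_3(A) \subseteq M_{i+1}(A)$ (Theorem \ref{GLthm} with $l=3$, noting $i+3-2 = i+1$), the product $\overline{A}_0 \times \overline{A}_{i-2} \to \overline{A}_{i-2}$ is well defined, and likewise on the other side. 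The degree-zero times degree-zero product is just the ring structure of $Q_3(A) = A/M_3(A)$.

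Finally I would note that associativity, bilinearity, and the unit (the image of $1 \in A$ in $Q_3(A)$) are all inherited from $A$, since all the structure maps are induced by the corresponding maps on $A$ and the verifications above show these descend consistently to the graded pieces. There is no real obstacle here — the only thing to watch is the bookkeeping of indices, making sure the shift "$M_i$ lives in degree $i-2$" is the unique choice that turns the Gupta--Levin inequality $M_m \cdot M_l \subseteq M_{m+l-2}$ into additivity of degrees, which it does precisely because $(m-2)+(l-2) = (m+l-2)-2$. The main (mild) subtlety worth stating explicitly is that the degree-zero piece is $Q_3(A)$ rather than $Q_2(A)$ or $A/M_2(A)$: this is forced because Theorem \ref{GLthm} requires both factors to have index $\ge 2$, so products of $M_i$ ($i \ge 3$) land in $M_j$ with $j \ge 4$, never giving us control below $M_3$; hence $A/M_3(A)$ is the natural receptacle for "degree zero."
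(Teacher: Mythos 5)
Your argument is correct and is exactly the verification the paper leaves implicit: the corollary is stated as an immediate consequence of Theorem \ref{GLthm}, with the product induced by multiplication in $A$, well-definedness coming from $M_{i+1}M_j,\,M_iM_{j+1}\subseteq M_{i+j-1}$ and $M_3M_i\subseteq M_{i+1}$, and the degree shift $i\mapsto i-2$ chosen precisely so that $M_mM_l\subseteq M_{m+l-2}$ becomes additivity of degrees. Your bookkeeping (including the role of $Q_3(A)$ as the degree-zero piece) matches the intended proof, so there is nothing to correct.
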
 

\begin{remark}
It is proved in \cite{GL} that $[[M_i(A),M_j(A)],M_k(A)]\subset
M_{i+j+k-3}(A)$, which implies that the algebra $\overline{A}$ 
is Lie nilpotent of index 2. 
\end{remark}

It is interesting that the result of Theorem \ref{GLthm} can
sometimes be improved. Namely, let us say that a pair 
$(m,l)$ of natural numbers is null if for any algebra $A$ 
$$
M_m(A)M_l(A)\subset M_{m+l-1}(A)
$$
(clearly, this property does not depend on the order of 
elements in the pair, and any pair $(1,m)$ is null). 

\begin{lemma}\label{critspec}
The pair $(m,l)$ is null if and only if the element
$$
[\ldots[x_1,x_2],\ldots, x_m]\cdot [\ldots[x_{m+1},x_{m+2}],\ldots ,x_{m+l}]
$$
is in $M_{m+l-1}(A_{m+l})$.  
\end{lemma}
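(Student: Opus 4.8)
The plan is to dispatch the ``only if'' direction in one line and prove the converse by combining the hypothesis with Theorem~\ref{GLthm}. For ``only if'': if $(m,l)$ is null, apply the inclusion $M_m(A)M_l(A)\subset M_{m+l-1}(A)$ with $A=A_{m+l}$, using that $[\ldots[x_1,x_2],\ldots,x_m]\in L_m(A_{m+l})\subset M_m(A_{m+l})$ and $[\ldots[x_{m+1},x_{m+2}],\ldots,x_{m+l}]\in L_l(A_{m+l})\subset M_l(A_{m+l})$. So all the content is in the converse; we may assume $m,l\ge 2$, the cases $m=1$ or $l=1$ being immediate from the definitions.

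First I would upgrade the hypothesis from a single universal element to a statement valid in every algebra. From the inductive definition $L_{k+1}(A)=[A,L_k(A)]$ and antisymmetry of the bracket, $L_k(A)$ is spanned by left-normed commutators $[\ldots[a_1,a_2],\ldots,a_k]$, $a_i\in A$. An algebra homomorphism $\varphi\colon A_{m+l}\to A$ is a Lie homomorphism, so it carries each $L_k(A_{m+l})$ into $L_k(A)$ and hence $M_{m+l-1}(A_{m+l})=A_{m+l}\,L_{m+l-1}(A_{m+l})\,A_{m+l}$ into $M_{m+l-1}(A)$. Applying this to the $\varphi$ that sends the generators to arbitrary elements of $A$, and using bilinearity, the hypothesis yields
\[
L_m(A)\cdot L_l(A)\subset M_{m+l-1}(A)\qquad\text{for every algebra }A.
\]

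Next I would establish $M_m(A)M_l(A)\subset M_{m+l-1}(A)$. Since $M_m(A)=A\,L_m(A)\,A$, the product $M_m(A)M_l(A)$ is spanned by elements $a\,\gamma\,b\,\delta\,a'$ with $\gamma\in L_m(A)$, $\delta\in L_l(A)$, $a,b,a'\in A$, so as $M_{m+l-1}(A)$ is a two-sided ideal it is enough to show $\gamma\, b\,\delta\in M_{m+l-1}(A)$. For this I would use the identity
\[
\gamma\, b\,\delta=b\,(\gamma\delta)+[\gamma,b]\,\delta .
\]
The first summand lies in $M_{m+l-1}(A)$ because $\gamma\delta\in M_{m+l-1}(A)$ by the inclusion just proved and $M_{m+l-1}(A)$ is a left ideal. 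For the second, $[\gamma,b]=-[b,\gamma]\in L_{m+1}(A)\subset M_{m+1}(A)$ since $\gamma\in L_m(A)$, and $\delta\in L_l(A)\subset M_l(A)$, so $[\gamma,b]\,\delta\in M_{m+1}(A)\cdot M_l(A)\subset M_{(m+1)+l-2}(A)=M_{m+l-1}(A)$ by Theorem~\ref{GLthm}. Adding the two contributions completes the argument.

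The hard part is really only conceptual: spotting the splitting $\gamma b=b\gamma+[\gamma,b]$, which trades the length-$m$ commutator $\gamma$ for the length-$(m+1)$ commutator $[\gamma,b]$---exactly enough for the generic Gupta--Levin bound (which drops the index by two) to reach $M_{m+l-1}$ rather than only $M_{m+l-2}$---while the hypothesis is used precisely once, on $b(\gamma\delta)$, to recover the missing $+1$. The remaining points are routine; the main one is the standard fact that $L_k(A)$ is spanned by left-normed commutators, which is what lets us pass from the single universal element to the inclusion $L_m(A)\cdot L_l(A)\subset M_{m+l-1}(A)$.
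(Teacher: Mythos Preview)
Your proof is correct and follows essentially the same approach as the paper: reduce ``$(m,l)$ is null'' to the condition $L_m(A)L_l(A)\subset M_{m+l-1}(A)$ for all $A$ via Theorem~\ref{GLthm} (your splitting $\gamma b\delta=b(\gamma\delta)+[\gamma,b]\delta$ is exactly the content the paper hides in the phrase ``By Theorem~\ref{GLthm}''), and then observe that this latter condition is equivalent to the universal element lying in $M_{m+l-1}(A_{m+l})$. The paper's proof is two sentences with the details suppressed; yours fills them in.
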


\begin{proof}
By Theorem \ref{GLthm}, a pair $(m,l)$ is null iff 
$L_m(A)L_l(A)\subset M_{m+l-1}(A)$ for any $A$. 
Clearly, this happens if and only if 
the statement of Lemma \ref{critspec} holds, as desired.   
\end{proof}

\begin{theorem}\label{special} 
If $l+m\le 7$, then the unordered pair $(m,l)$ is null
iff it is not (2,2) or (2,4). 
\end{theorem}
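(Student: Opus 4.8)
The plan is to reduce the theorem, pair by pair, to a single membership question via Lemma \ref{critspec}, and then to settle each such question either from the known structure of $Q_{n,3}$ or by a finite linear-algebra computation. For each unordered pair $(m,l)$ with $2\le m\le l$ and $m+l\le 7$ --- these are $(2,2)$, $(2,3)$, $(2,4)$, $(3,3)$, $(2,5)$ and $(3,4)$ --- set
$$
w_{m,l}=[\ldots[x_1,x_2],\ldots,x_m]\cdot[\ldots[x_{m+1},x_{m+2}],\ldots,x_{m+l}]\in A_{m+l},
$$
a multilinear element of degree $m+l$. By Lemma \ref{critspec}, $(m,l)$ is null if and only if $w_{m,l}\in M_{m+l-1}(A_{m+l})$, so it remains to verify that $w_{m,l}$ lies in $M_{m+l-1}(A_{m+l})$ for $(m,l)\in\{(2,3),(3,3),(2,5),(3,4)\}$, and that it does not for $(m,l)\in\{(2,2),(2,4)\}$.

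The pair $(2,2)$ is the one I would handle by hand, using Feigin and Shoikhet's description of $Q_{n,3}$. Under the algebra isomorphism $Q_3(A_4)=A_4/M_3(A_4)\cong\C[x_1,\ldots,x_4]\otimes\wedge^{\rm even}(dx_1,\ldots,dx_4)$ with product $a*b=ab+da\wedge db$, the class of $x_i$ is $x_i$, and hence the class of $[x_i,x_j]$ is $2\,dx_i\wedge dx_j$; since $d(dx_i\wedge dx_j)=0$, the class of $w_{2,2}=[x_1,x_2][x_3,x_4]$ is $4\,dx_1\wedge dx_2\wedge dx_3\wedge dx_4\ne 0$. Therefore $w_{2,2}\notin M_3(A_4)$ and $(2,2)$ is not null.

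For the remaining five pairs the question is finite-dimensional. Work in the multilinear component $V\subset A_{m+l}$ spanned by the $(m+l)!$ monomials $x_{\sigma(1)}\cdots x_{\sigma(m+l)}$, $\sigma\in S_{m+l}$; note $w_{m,l}\in V$. Using that $M_{m+l-1}(A_{m+l})=A_{m+l}\,L_{m+l-1}(A_{m+l})\,A_{m+l}$ is multidegree-graded, that every element of $L_{m+l-1}(A_{m+l})$ has degree $\ge m+l-1$, and that every multilinear Lie element of degree $k$ lies in $L_k(A_{m+l})$ (which is elementary, left-normed brackets of length $k$ spanning), a short degree count shows that $M_{m+l-1}(A_{m+l})\cap V$ is spanned by the vectors $x_i\cdot c$ and $c\cdot x_i$, where $i$ ranges over $1,\ldots,m+l$ and $c$ ranges over the $(m+l-2)!$ left-normed brackets with fixed first letter in the variables $\{x_1,\ldots,x_{m+l}\}\setminus\{x_i\}$. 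So for $m+l\le 7$ one has a linear system in at most $7!=5040$ unknowns, and one asks whether $w_{m,l}$ lies in the span of these $2(m+l)(m+l-2)!$ explicit vectors. I would carry this out by machine (as the authors did, with MAGMA), obtaining membership for $(2,3),(3,3),(2,5),(3,4)$ and non-membership for $(2,4)$, which completes the proof.

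The main obstacle is conceptual rather than computational: apart from $(2,2)$, none of these assertions seems to follow from the structural results available here, since $w_{2,4}\in M_5$ while $w_{2,5},w_{3,4}\in M_6$, and no description of the ideals $M_5$ or $M_6$ is known; attempts to rewrite $w_{m,l}$ directly as a manifest element of $M_{m+l-1}$ using commutator identities tend to run in circles. Within the computational route, the one point requiring genuine care is the honest spanning set for $M_{m+l-1}(A_{m+l})\cap V$ above --- i.e. the degree bookkeeping, and in particular the containment of the multilinear degree-$(m+l)$ part of $L_{m+l-1}(A_{m+l})$ in $\sum_i\bigl(x_i\,\mathcal L_i+\mathcal L_i\,x_i\bigr)$, where $\mathcal L_i$ denotes the multilinear Lie elements of degree $m+l-1$ in $\{x_j:j\ne i\}$ --- so that the linear algebra is run against a complete, not merely sufficient-looking, family of generators.
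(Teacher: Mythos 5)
Your reduction via Lemma \ref{critspec} and your treatment of $(2,2)$ through the Feigin--Shoikhet model of $Q_{n,3}$ are fine (the latter is a clean version of the paper's ``by hand'' remark), and your overall architecture --- finite linear algebra in the multilinear component, done by machine --- is the same as the paper's MAGMA verification. The gap is in the one step you yourself flag: the claimed spanning set for $M_{m+l-1}(A_{m+l})\cap V$ is not complete, and in fact the asserted containment is false. Since $L_{N-1}(A)$ is the lower central series of the \emph{associative} algebra $A=A_N$, it is spanned by left-normed brackets of length $N-1$ in arbitrary elements of $A$, not just in the generators; its multilinear degree-$N$ part is therefore nonzero, spanned by length-$(N-1)$ brackets in which exactly one entry is a quadratic monomial $x_px_q$. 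Using $M_k(A)=L_k(A)\cdot A$, the multilinear part of $M_{N-1}$ is the span of the vectors $c\,x_i$ \emph{together with} these quadratic-entry brackets, and the latter do not lie in $\sum_i(x_i\mathcal{L}_i+\mathcal{L}_i x_i)$. Already for $N=4$ this can be seen by a dimension count: each of $\sum_i x_i\mathcal{L}_i$ and $\sum_i \mathcal{L}_i x_i$ has dimension $8$, and they intersect in a subspace of dimension at least $2$ (the map $\oplus_i\mathcal{L}_i\to \ell_4$, $(c_i)\mapsto\sum_i[c_i,x_i]$, onto the $6$-dimensional multilinear degree-$4$ Lie component has a $2$-dimensional kernel, and each kernel element gives $\sum_i x_ic_i=\sum_i c_ix_i$); hence your span has dimension at most $14$, while $M_3(A_4)\cap V$ has dimension $24-8=16$ by the Feigin--Shoikhet description of the multilinear part of $Q_{4,3}$. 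So elements such as $[[x_1x_2,x_3],x_4]$, equivalently the symmetric sums $[x_1,x_4][x_2,x_3]+[x_1,x_3][x_2,x_4]$, lie in $M_3\cap V$ but not in your family.

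The consequence is not cosmetic: testing membership of $w_{m,l}$ against this proper subspace could (and in these cases very likely would) return ``not a member'' for pairs that are in fact null, so the computational verification for $(2,3),(3,3),(2,5),(3,4)$ does not go through as described. Note that the paper's own computer-free proof for $(2,3)$ hinges on the identity $[a,b][c,d]+[a,d][c,b]=[[ac,b],d]+a[d,[c,b]]-[[a,b],d]c$, whose first term $[[ac,b],d]$ is exactly a bracket with a composite entry --- i.e.\ precisely the kind of generator your family omits. The fix is straightforward: enlarge the spanning family to include, besides $x_i\,c$ and $c\,x_i$, all left-normed brackets of length $m+l-1$ in the $m+l$ variables with one entry of the form $x_px_q$ (this is a complete list, by the identity $M_k(A)=L_k(A)\cdot A$ and a multidegree decomposition), and run the linear algebra against that; with this correction your plan matches what the MAGMA computation must actually do.
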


\begin{proof}
The property of Lemma \ref{critspec} was checked using the MAGMA program, 
and it turns out that it holds for
(2,3),(3,3),(2,5),(3,4), but not for (2,2) and (2,4).

Actually, it is easy to check by hand that the property 
of Lemma \ref{critspec} does not hold for (2,2), 
and here is a computer-free proof that it holds for (2,3).  

We need to show that in $Q_{n,4}$, we have 
$$
[x_i,x_j][x_k,[x_l,x_m]]=0. 
$$
To do so, define $S(i,j,k,l,m):=[x_i,x_j][x_k,[x_l,x_m]]+
[x_k,x_j][x_i,[x_l,x_m]]$. Then in $Q_{n,4}$ we have 
$$
S(i,j,k,l,m)=0. 
$$
Indeed, it suffices to show that in $Q_{n,4}$
$$
[x_i,x_j][x_k,[x_l,x_m]]+[x_i,[x_l,x_m]][x_k,x_j]=0,
$$
which follows from the fact that in a free algebra we have 
\begin{equation*}
[a,b][c,d]+[a,d][c,b]
=[[ac,b],d] + a[d,[c,b]]
-[[a,b],d]c, 
\end{equation*}
where $a=x_i,b=x_j,c=x_k,d=[x_l,x_m]$.

Now set
\begin{eqnarray*}
&&R(i,j,k,l,m)\\
&=&-\frac{1}{2} S(x_{j},x_{k},x_{l},x_{m},x_{i}) +\frac{1}{2} S(x_{j},x_{k},x_{m},x_{l},x_{i}) -\frac{1}{2} S(x_{j},x_{k},x_{i},x_{l},x_{m})\\
&&\quad-\frac{1}{2}S(x_{j},x_{m},x_{l},x_{k},x_{i}) +\frac{1}{2}S(x_{j},x_{m},x_{k},x_{l},x_{i}) -\frac{1}{2} S(x_{j},x_{m},x_{i},x_{l},x_{k})\\
&&\quad-S(x_{j},x_{i},x_{k},x_{l},x_{m}) -S(x_{j},x_{i},x_{m},x_{l},x_{k})+\frac{1}{2} S(x_{l},x_{k},x_{m},x_{j},x_{i})\\
&&\quad -\frac{1}{2} S(x_{l},x_{k},x_{i},x_{j},x_{m}) +\frac{1}{2} S(x_{l},x_{m},x_{k},x_{j},x_{i})
-\frac{1}{2} S(x_{l},x_{m},x_{i},x_{j},x_{k})\\
&&\quad-\frac{1}{2} S(x_{k},x_{i},x_{m},x_{j},x_{l}).
\end{eqnarray*}
Then one can show by a direct computation that in $A_n$ we have 
\begin{equation*}
[x_{i},x_{j}][x_{k},[x_{l},x_{m}]]=
\frac{1}{3}(R(i, j, m, l, k)-R(i, j, l, m, k)).
\end{equation*}
Therefore, we see that $[x_{i},x_{j}][x_{k},[x_{l},x_{m}]]=0$ in
$Q_{n,4}$, as desired. 
\end{proof} 

Further computer simulations by T. Schedler using MAGMA 
have shown that the pairs (2,6) and (4,4) are not null.
This gives rise to the following conjecture: 

\begin{conjecture}
A pair $(i,j)$ is null if and only if $i$ or $j$ is odd.   
\end{conjecture}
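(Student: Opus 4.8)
We outline a possible approach to this conjecture. By Lemma \ref{critspec}, the pair $(m,l)$ with $m,l\ge 2$ (pairs involving $1$ being trivially null) is null if and only if
$$
c_m c_l':=[\ldots[x_1,x_2],\ldots,x_m]\cdot[\ldots[x_{m+1},x_{m+2}],\ldots,x_{m+l}]
$$
lies in $M_{m+l-1}(A_{m+l})$. The first step is to reformulate this via Theorem \ref{fid}. Both $c_m$ and $c_l'$ lie in $\ell_n'=[\ell_n,\ell_n]$ and are Lie elements, hence lie in degree $1$ of the Lie filtration; so $c_m c_l'$ lies in degree $2$, and its symbol in $D_{n,m+l-1}=\gr Q_{n,m+l-1}$ is, under the isomorphism $\theta$, the product $\bar c_m\bar c_l'$ of the images $\bar c_m,\bar c_l'\in\Lambda_{n,m+l-1}$ (take $n=m+l$). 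If $\bar c_m\bar c_l'\ne 0$, then $c_m c_l'\ne 0$ in $Q_{n,m+l-1}$ and the pair is not null. Conversely, if $\bar c_m\bar c_l'=0$ then $c_m c_l'$ agrees, modulo $M_{m+l-1}(A_n)$, with an element of $\ell_n$ homogeneous of degree $m+l$; such an element lies in $L_{m+l}(A_n)\subseteq M_{m+l}(A_n)\subseteq M_{m+l-1}(A_n)$, so $c_m c_l'\in M_{m+l-1}(A_n)$ and the pair is null. Thus $(m,l)$ is null if and only if $\bar c_m\bar c_l'=0$ in the finite dimensional commutative algebra $\Lambda_{n,m+l-1}$, and the conjecture becomes a statement about these algebras, refining Question \ref{dime}.

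For the implication ``$m$ or $l$ odd $\Rightarrow$ $(m,l)$ null'', say $m$ odd, the plan is to generalize the computer-free proof for the pair $(2,3)$ above. That proof has two parts. The first is an identity in the free algebra exhibiting a suitably symmetrized combination of $c_m c_l'$ as a sum of products in which a bracket of length $a$ is multiplied by one of length $b$ with $a+b\ge m+l$; since $[L_a(A),L_b(A)]\subseteq L_{a+b}(A)$, and more generally $[[M_i(A),M_j(A)],M_k(A)]\subseteq M_{i+j+k-3}(A)$ (the Remark after Theorem \ref{GLthm}), such products lie in $M_{m+l-1}(A_n)$. We expect the analogue of this part for general $(m,l)$ to be a lengthy but routine consequence of the Leibniz rule together with the fact that the two factors of $c_m c_l'$ commute modulo $M_{m+l-1}$ (again because $[L_m,L_l]\subseteq L_{m+l}$). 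The second part is a $GL$-equivariant decomposition of $c_m c_l'$ itself into such symmetrized pieces, and this is where parity must enter: for odd $m$ the reversal and cyclic symmetries of the left-normed bracket $[\ldots[x_1,x_2],\ldots,x_m]$ should confine the surviving isotypic components of $c_m c_l'$ to the span of the ``good'' elements, whereas for $m,l$ both even one component persists. Obtaining this decomposition uniformly in $m,l$ --- ideally an explicit projector, or a recursion on $l$ --- is the delicate step here.

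For the converse, ``$m,l$ both even $\Rightarrow$ $(m,l)$ not null'', one must show $\bar c_m\bar c_l'\ne 0$ in $\Lambda_{n,m+l-1}$ for every even pair, equivalently exhibit an algebra in which the corresponding product of iterated commutators survives modulo $M_{m+l-1}$. The base case $(2,2)$ is immediate: in $Q_{n,3}=\C[x_1,\ldots,x_n]\otimes\wedge^{\rm even}(dx_1,\ldots,dx_n)$ one has $[x_1,x_2][x_3,x_4]=dx_1\wedge dx_2\wedge dx_3\wedge dx_4\ne 0$. One could then try to propagate non-nullity to all even pairs, either by constructing ``higher'' Feigin--Shoikhet algebras (a tower of Lie nilpotent quotients of $A_n$ of increasing index, built from iterated differential forms) in which these products remain nonzero for parity reasons, or, inside $\Lambda_{n,m+l-1}$ directly, by describing enough of the degree-$(m+l)$ part of $\gr M_{n,m+l-1}$ --- generated via the Leibniz rule by top-degree Lie elements and by expansions of length-$(m+l)$ left-normed brackets with one letter of degree $2$ --- to verify that $c_m c_l'$ is not a relation when $m,l$ are even. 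In both approaches the decisive obstacle is the same, and it is the main obstacle overall: there is presently no structure theory for $\Lambda_{n,i}$ with $i\ge 5$, so a complete proof of the conjecture seems to require at least a partial answer to Question \ref{dime} in that range.
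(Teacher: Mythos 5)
This statement is presented in the paper as a \emph{conjecture}: the authors' only evidence is the MAGMA verification that $(2,3)$, $(3,3)$, $(2,5)$, $(3,4)$ are null and $(2,2)$, $(2,4)$, $(2,6)$, $(4,4)$ are not, plus the hand computation for $(2,3)$ in the proof of Theorem \ref{special}. There is therefore no proof in the paper to compare against, and your text --- as you yourself acknowledge --- is a research outline, not a proof. Two pieces of it are correct and genuinely useful. First, the reduction: writing $c_m=[\ldots[x_1,x_2],\ldots,x_m]$ and $c_l'=[\ldots[x_{m+1},x_{m+2}],\ldots,x_{m+l}]$, both lie in $\ell_n'$ and hence in filtration degree $1$ of the Lie filtration, so by Theorem \ref{fid} the pair $(m,l)$ is null iff $\bar c_m\bar c_l'=0$ in $\Lambda_{m+l,\,m+l-1}$; your argument for the converse direction (a homogeneous Lie element of degree $m+l$ lies in $\gamma_{m+l}(\ell_n)\subseteq L_{m+l}(A_n)\subseteq M_{m+l-1}(A_n)$, and $M_{m+l-1}$ is graded so the scalar and lower-degree parts can be discarded) is sound. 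Second, the computation that $(2,2)$ is not null in the Feigin--Shoikhet model of $Q_{n,3}$ is correct, since $[x_1,x_2]*[x_3,x_4]$ is a nonzero multiple of $dx_1\wedge dx_2\wedge dx_3\wedge dx_4$.

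Beyond that, both implications of the conjecture remain unproved, and one intermediate claim as stated would fail. For ``$m$ or $l$ odd $\Rightarrow$ null'' you propose to reduce to products of two brackets of lengths $a,b$ with $a+b\ge m+l$ and then cite $[L_a(A),L_b(A)]\subseteq L_{a+b}(A)$ and $[[M_i(A),M_j(A)],M_k(A)]\subseteq M_{i+j+k-3}(A)$; but those facts concern commutators, not products, and Theorem \ref{GLthm} only gives $L_a(A)L_b(A)\subseteq M_{a+b-2}(A)$, which for $a+b=m+l$ lands in $M_{m+l-2}$, one step short of where you need to be. (The paper's $(2,3)$ identity avoids this because every term on the right-hand side is a single commutator of total weight at least $5$, or such a commutator multiplied by an element of $A$ --- not a product of two shorter brackets.) Moreover, no candidate for the symmetrized decomposition or projector is given for general odd $m$, so the ``routine'' part is not actually routine. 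For ``both even $\Rightarrow$ not null'' you establish only $(2,2)$; the cases $(2,4)$, $(2,6)$, $(4,4)$ are known to the authors only by computer, and nothing in the outline produces a nonzero $\bar c_m\bar c_l'$ in $\Lambda_{n,m+l-1}$ for general even $m,l$ --- as you note, this would require structural information about $\Lambda_{n,i}$ for $i\ge 5$ (Question \ref{dime}) that is currently unavailable. In short: keep the reduction and the $(2,2)$ computation, but the conjecture remains open and your text should not be presented as a proof of it.
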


\section{Description of $Q_{n,4}$ by generators and relations}

In \cite{FS}, Feigin and Shoikhet described the algebra 
$Q_{n,3}$ by generators and relations. Namely, 
they proved the following result. 

\begin{theorem} 
$Q_{n,3}$ is generated by $x_i$, $i=1,\ldots,n$, and 
$y_{ij}=[x_i,x_j]$, $1\le i,j\le n$, with 
defining relations 
$$
[x_i,y_{jl}]=0, 
$$
and the quadratic relation
$$
y_{ij}y_{kl}+y_{ik}y_{jl}=0
$$
saying that $y_{ij}y_{kl}$ is antisymmetric in its indices.
\end{theorem}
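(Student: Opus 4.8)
The plan is to show that the algebra $B$ defined by the stated presentation (generators $x_i$ and $y_{ij}$, relations $[x_i,y_{jl}]=0$ and $y_{ij}y_{kl}+y_{ik}y_{jl}=0$) is isomorphic to $Q_{n,3}$, by exhibiting the obvious surjection $B\to Q_{n,3}$ and identifying it as an isomorphism through a graded dimension count, in the grading with $\deg x_i=1$ (so $\deg y_{ij}=2$). First I would check that the relations hold in $Q_{n,3}$. Since $L_3(A_n)\subset M_{n,3}$, we have $[x_i,y_{jl}]=[x_i,[x_j,x_l]]=0$ in $Q_{n,3}$. For the quadratic relation I would pass to the Feigin--Shoikhet model $Q_{n,3}\cong(\C[x_1,\ldots,x_n]\otimes\wedge^{\rm even}(dx_1,\ldots,dx_n),*)$ with $a*b=ab+da\wedge db$; there $x_i\mapsto x_i$ and $y_{ij}\mapsto x_i*x_j-x_j*x_i=2\,dx_i\wedge dx_j$, and since $d(dx_i\wedge dx_j)=0$ one gets $y_{ij}*y_{kl}=4\,dx_i\wedge dx_j\wedge dx_k\wedge dx_l$, which is totally antisymmetric in $i,j,k,l$, so $y_{ij}y_{kl}+y_{ik}y_{jl}=0$. (Alternatively, one can write $[x_i,x_j][x_k,x_l]+[x_i,x_k][x_j,x_l]$ explicitly as an element of $M_{4,3}$, in the spirit of the identities used in the proof of Theorem \ref{special}.) Since $Q_{n,3}$ is generated by the $x_i$ and the $y_{ij}$, this gives a surjective graded homomorphism $\phi\colon B\twoheadrightarrow Q_{n,3}$.

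Next I would bound $\dim B[d]$ from above. The relations $[x_i,y_{jl}]=0$ say that each $y_{ij}$ is central: it commutes with every $x_i$, hence (being a commutator of $x$'s) with every $y_{kl}$ as well. So, pushing all $y$'s to the right and then sorting the $x$'s via $x_jx_i=x_ix_j-y_{ij}$, a PBW-type reduction shows that $B$ is spanned by the elements $x_1^{a_1}\cdots x_n^{a_n}\cdot w$ with $w$ a product of $y$'s. The quadratic relation, together with centrality, then yields two facts about such $w$: (a) any product of $y$'s with a repeated index vanishes (e.g.\ $y_{ij}y_{il}=-y_{ii}y_{jl}=0$); and (b) for distinct $s_1,\ldots,s_{2k}$, the product $y_{s_1s_2}y_{s_3s_4}\cdots y_{s_{2k-1}s_{2k}}$ equals $\mathrm{sgn}(\sigma)$ times the ``standard'' monomial on the sorted indices, where $\sigma$ sorts $(s_1,\ldots,s_{2k})$ --- both because the within-factor swaps $y_{ij}\mapsto-y_{ji}$ and the relation-swaps $y_{ij}y_{kl}\mapsto-y_{ik}y_{jl}$ realize all adjacent transpositions of the index sequence, each with sign $-1$. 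Hence the span of the $y$-monomials has dimension $\le\binom{n}{2k}$ in $y$-degree $k$, so that
$$
\dim B[d]\ \le\ \sum_{k\ge 0}\binom{d-2k+n-1}{n-1}\binom{n}{2k}\ =\ \dim\bigl(\C[x_1,\ldots,x_n]\otimes\wedge^{\rm even}(dx_1,\ldots,dx_n)\bigr)[d].
$$

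Finally, the Feigin--Shoikhet theorem gives $\dim Q_{n,3}[d]=\dim(\C[x_1,\ldots,x_n]\otimes\wedge^{\rm even})[d]$, which is finite; combined with the graded surjection $\phi$ and the bound above, this forces $\dim B[d]=\dim Q_{n,3}[d]$ for all $d$, so $\phi$ is an isomorphism. I expect the crux to be step (b): verifying that the single quadratic relation, propagated through arbitrary products by commutativity, suffices to bring every $y$-monomial to a standard one (or to $0$) consistently --- equivalently, that the commutative algebra on generators $\eta_{ij}=-\eta_{ji}$ modulo $\eta_{ij}\eta_{kl}+\eta_{ik}\eta_{jl}=0$ has graded dimensions exactly $\binom{n}{2k}$, so it maps isomorphically onto $\wedge^{\rm even}(dx_1,\ldots,dx_n)$ via $\eta_{ij}\mapsto dx_i\wedge dx_j$. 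This is not deep, but the sign bookkeeping in the rewriting needs care.
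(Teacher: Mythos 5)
Your argument is correct, but note that the paper itself does not prove this statement at all: it is quoted as a theorem of Feigin and Shoikhet \cite{FS}, so there is no internal proof to compare against. What you give is a legitimate self-contained derivation of the presentation from the \emph{other} FS result stated in the introduction, namely the isomorphism $Q_{n,3}\cong(\C[x_1,\ldots,x_n]\otimes\wedge^{\rm even}(dx_1,\ldots,dx_n),\,*)$. Your steps check out: the relations do hold in $Q_{n,3}$ (the Lie relation because $L_3\subset M_{n,3}$, the quadratic one because $y_{ij}\mapsto 2\,dx_i\wedge dx_j$ and closed forms multiply by $\wedge$ under $*$, giving total antisymmetry of $y_{ij}y_{kl}$); centrality of the $y_{ij}$ plus the straightening $x_jx_i=x_ix_j-y_{ij}$ gives the spanning set $x^a\cdot w(y)$; and the sign bookkeeping in your step (b) is consistent, since within-factor swaps and the $2\!\leftrightarrow\!3$ swap give all adjacent transpositions sign $-1$, while commuting whole $y$-factors is an even permutation, so every $y$-monomial reduces to $\pm$ a standard one or to $0$, yielding the bound $\binom{n}{2k}$ and hence $\dim B[d]\le\dim Q_{n,3}[d]$; with the graded surjection this forces an isomorphism. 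The one caveat is logical dependence: your proof takes the FS isomorphism theorem as input, so it only stands provided that theorem is established independently of the presentation (which it is in \cite{FS}, via the Fedosov-type quantization map); if you want to remove even that concern, your parenthetical alternative --- exhibiting $[x_i,x_j][x_k,x_l]+[x_i,x_k][x_j,x_l]$ explicitly as an element of $M_3(A_4)$, in the spirit of the identities in the proof of Theorem \ref{special} --- would replace the only place where the isomorphism is used for the ``relations hold'' direction, though you would still need it (or an equivalent lower bound on $\dim Q_{n,3}[d]$) for the final dimension count.
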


\begin{corollary}
The algebra $\Lambda_{n,3}$ is generated by $y_{ij}$ with
defining relations 
\begin{equation*}
y_{ij}=-y_{ji},\quad 
y_{ij}y_{kl}+y_{ik}y_{jl}=0.
\end{equation*}
\end{corollary}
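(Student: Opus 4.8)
The plan is to deduce this directly from the preceding theorem, by passing to associated graded algebras under the Lie filtration and then using a dimension count to exclude extra relations.

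First I would pin down $\Lambda_{n,3}$. Since $M_{n,3}$ contains $L_3(A_n)$, every iterated bracket of length $\ge 3$ vanishes in $Q_{n,3}$, so the image of $\ell_n'$ in $Q_{n,3}$ --- and hence its image in $D_{n,3} = \mathrm{gr}\,Q_{n,3}$ --- is spanned by the classes $\bar y_{ij}$ of the $y_{ij} = [x_i,x_j]$, which sit in Lie-degree $2$ and satisfy $\bar y_{ij} = -\bar y_{ji}$. Thus $\Lambda_{n,3}$ is generated by the $\bar y_{ij}$. Next, the relation $y_{ij}y_{kl} + y_{ik}y_{jl} = 0$ of $Q_{n,3}$ says that $w := y_{ij}y_{kl} + y_{ik}y_{jl}$, computed in $A_n$, lies in $M_{n,3}$; as $w$ has Lie-filtration degree $\le 4$ with degree-$4$ symbol equal to the corresponding element of $\mathrm{Sym}\,\ell_n'$, that symbol lies in $\mathrm{gr}\,M_{n,3}$ and we obtain $\bar y_{ij}\bar y_{kl} + \bar y_{ik}\bar y_{jl} = 0$ in $\Lambda_{n,3}$ (in the degenerate index configurations where this symbol already vanishes in $\mathrm{Sym}\,\ell_n'$ there is nothing to prove). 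Hence there is a surjection of commutative algebras $B_n \twoheadrightarrow \Lambda_{n,3}$, where $B_n$ is presented by generators $y_{ij}$ and relations $y_{ij} = -y_{ji}$, $y_{ij}y_{kl} + y_{ik}y_{jl} = 0$.

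It remains to see this surjection is an isomorphism, which I would do by matching dimensions. The relations of $B_n$ force $y_{ij}y_{kl}$ to be totally antisymmetric in $i,j,k,l$; a reordering argument using transpositions of adjacent factors then shows $B_n$ is spanned by the standard monomials $y_{i_1i_2}y_{i_3i_4}\cdots y_{i_{2r-1}i_{2r}}$ with $i_1 < i_2 < \cdots < i_{2r}$, so $\dim B_n \le \sum_{r\ge 0}\binom{n}{2r} = 2^{n-1}$. On the other hand, by the result of Feigin and Shoikhet quoted above, $\Lambda_{n,3}\cong\wedge^{\mathrm{even}}(\xi_1,\ldots,\xi_n)$, so $\dim\Lambda_{n,3} = 2^{n-1}$, forcing the surjection $B_n \twoheadrightarrow \Lambda_{n,3}$ to be an isomorphism. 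The only point requiring care is the passage to the associated graded in the second paragraph; everything else is routine, and one could equally well finish by identifying $B_n$ with $\wedge^{\mathrm{even}}(\xi_1,\ldots,\xi_n)$ explicitly via $y_{ij}\mapsto\xi_i\wedge\xi_j$.
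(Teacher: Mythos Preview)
Your argument is correct and matches the paper's intent: the corollary is stated without proof, as an immediate consequence of the Feigin--Shoikhet theorem together with the identification $\Lambda_{n,3}\cong\wedge^{\mathrm{even}}(\xi_1,\ldots,\xi_n)$ already recorded in Section~2, and your surjection-plus-dimension-count is exactly the natural way to spell this out. One small slip: in the paper's Lie filtration the elements $y_{ij}\in\ell_n$ sit in filtration degree~$1$ (not~$2$), so $w=y_{ij}y_{kl}+y_{ik}y_{jl}$ has filtration degree~$\le 2$ with degree-$2$ symbol in $\mathrm{Sym}^2\ell_n'$; this does not affect your reasoning.
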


In this section we would like to give a similar description of
the the algebras $Q_{n,4}$, $\Lambda_{n,4}$. 
As we know, the algebra $Q_{n,4}$ is generated
by the elements $x_i,y_{ij}$ as above, and also
$z_{ijk}=[y_{ij},x_k]$, $1\le i,j,k\le n$.
Our job is to find what relations 
to put on $x_i,y_{ij},z_{ijk}$ to generate the ideal 
$M_{n,4}$. This is done by the
following theorem, which is our main result. 

\begin{theorem}\label{main}
\begin{enumerate}
\item[(i)]The ideal $M_{n,4}$ is generated by the Lie
relations 
$$
[x_i,z_{jlm}]=0.
$$
the quadratic relations  
$$
y_{ij}z_{klm}=0, 
$$
and the cubic relations 
$$
y_{ij}y_{kl}y_{mp}+y_{ik}y_{jl}y_{mp}=0,
$$
saying that $y_{ij}y_{kl}y_{mp}$ is
antisymmetric in its indices. 
\item[(ii)] The algebra $\Lambda_{n,4}$ 
is generated by $y_{ij},z_{ijk}$ subject to the linear
relations
$$
y_{ij}=-y_{ji},\quad z_{ijk}=-z_{jik},\quad z_{ijk}+z_{jki}+z_{kij}=0, 
$$
and the relations 
$$
y_{ij}z_{klm}=0,\quad z_{ijp}z_{klm}=0,\quad y_{ij}y_{kl}y_{mp}+y_{ik}y_{jl}y_{mp}=0.
$$ 
\end{enumerate}
\end{theorem}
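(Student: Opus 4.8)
Here is how I would approach Theorem~\ref{main}. Write $I\subset A_n$ for the two-sided ideal generated by the elements $[x_i,z_{jlm}]$, $\,y_{ij}z_{klm}$, and $y_{ij}y_{kl}y_{mp}+y_{ik}y_{jl}y_{mp}$, where $y_{ij}=[x_i,x_j]$ and $z_{ijk}=[[x_i,x_j],x_k]$, so that part (i) is exactly the assertion $I=M_{n,4}$. The inclusion $I\subseteq M_{n,4}$ is the easy half: $[x_i,z_{jlm}]\in L_4(A_n)\subseteq M_{n,4}$; $\,y_{ij}z_{klm}\in L_2(A_n)L_3(A_n)\subseteq M_2(A_n)M_3(A_n)\subseteq M_4(A_n)$ because the pair $(2,3)$ is null by Theorem~\ref{special}; and $y_{ij}y_{kl}y_{mp}+y_{ik}y_{jl}y_{mp}=(y_{ij}y_{kl}+y_{ik}y_{jl})y_{mp}$, where $y_{ij}y_{kl}+y_{ik}y_{jl}\in M_3(A_n)$ by the Feigin--Shoikhet relation, so this product lies in $M_3(A_n)M_2(A_n)\subseteq M_4(A_n)$, again by Theorem~\ref{special}. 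Hence there is a surjection $A_n/I\twoheadrightarrow Q_{n,4}$, and, passing to leading symbols in the Lie filtration and using in addition that the pair $(3,3)$ is null, a surjection $\widehat\Lambda\twoheadrightarrow\Lambda_{n,4}$, where $\widehat\Lambda$ denotes the abstract commutative algebra presented as in part (ii).

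The next step is a matching upper bound obtained from a normal form in $\bar A:=A_n/I$. Elementary manipulations with the defining relations show: each $z_{jlm}$ is central in $\bar A$ (induct on the length of a monomial $w$ using $[x_i,z_{jlm}]=0$ and the Leibniz rule); consequently $z_{abc}z_{def}=0$ and $u\,y_{ij}\,v\,z_{klm}\,w=0$ for any monomials $u,v,w$ (slide the central $z$ next to the $y$ and use $y_{ij}z_{klm}=0$); the $y_{ij}$ commute with one another (Jacobi identity together with centrality of the $z$'s); and $[x_i,y_{jk}]=z_{ijk}$. Moving all generators $x_i$ to the left --- each move of an $x_i$ past a $y_{jk}$ costs a central $z_{ijk}$ --- one finds that every element of $\bar A$ is a $\C$-linear combination of monomials $(\text{monomial in }x_1,\dots,x_n)\cdot m$ with $m$ lying in a fixed spanning set of $\widehat\Lambda$; hence $\dim\bar A[d]\le\dim(\C[x_1,\dots,x_n]\otimes\widehat\Lambda)[d]$ for all $d$. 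A direct computation then pins down $\widehat\Lambda$: it vanishes in every odd degree $\ge 5$ and in every degree that would require a product of two $\bar z$'s (as $\bar y\bar z=\bar z\bar z=0$), while its surviving components are $\widehat\Lambda[0]=k$, $\widehat\Lambda[2]=\wedge^2\C^n$, $\widehat\Lambda[3]=S_{(2,1)}\C^n$, $\widehat\Lambda[4]=S^2(\wedge^2\C^n)=\wedge^4\C^n\oplus S_{(2,2)}\C^n$, and $\widehat\Lambda[2r]=\wedge^{2r}\C^n$ for $r\ge 3$ (the cubic relation forces $\bar y^{\,r}$, $r\ge 3$, to be totally antisymmetric in its indices).

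What remains --- and this is the main obstacle --- is the reverse inequality $\dim Q_{n,4}[d]\ge\dim(\C[x_1,\dots,x_n]\otimes\widehat\Lambda)[d]$, i.e.\ a lower bound for $Q_{n,4}$. Once it is available, the surjection $A_n/I\twoheadrightarrow Q_{n,4}$ is an isomorphism in each degree, giving $M_{n,4}=I$ and hence (i), and then the identification $\gr Q_{n,4}=\C[x_1,\dots,x_n]\otimes\widehat\Lambda$ forces $\widehat\Lambda\xrightarrow{\sim}\Lambda_{n,4}$, giving (ii). I see two natural ways to produce the lower bound. The first is to construct an explicit model of $Q_{n,4}$, in the spirit of the Feigin--Shoikhet realization $Q_{n,3}\cong(\C[x_1,\dots,x_n]\otimes\wedge^{\rm even},*)$, on which $x_i,y_{ij},z_{ijk}$ act with precisely the components listed above --- presumably differential forms enlarged by the $S_{(2,1)}$-valued ``second differentials'' $z_{ijk}$ --- and to verify the defining relations of $Q_{n,4}$ hold there, so that $A_n$ surjects onto this model through $Q_{n,4}$. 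The second is structural: by Theorem~\ref{special} (applied to the pairs $(3,3)$ and $(2,3)$, with a short commutator computation), the ideal $N:=M_{n,3}/M_{n,4}$ of $Q_{n,4}$ satisfies $N^2=0$, is annihilated by $M_2(Q_{n,4})$, and is a symmetric bimodule, hence a module over $Q_{n,4}/M_2(Q_{n,4})=\C[x_1,\dots,x_n]$, fitting into $0\to N\to Q_{n,4}\to Q_{n,3}\to 0$; identifying $N$ as a $GL(n)$-equivariant $\C[x_1,\dots,x_n]$-module --- it should be generated in degrees $3$ and $4$ by copies of $S_{(2,1)}\C^n$ and $S_{(2,2)}\C^n$ --- together with the Feigin--Shoikhet computation of $Q_{n,3}$ then supplies the required dimensions. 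I expect this identification of $N$, equivalently the proof that the listed relations and their formal consequences exhaust all the relations, to be the hardest point.
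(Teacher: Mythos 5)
Your forward inclusion $I\subseteq M_{n,4}$ is correct and coincides with the paper's: the quadratic relations lie in $M_2M_3\subseteq M_4$ and the cubic ones in $M_3M_2\subseteq M_4$ by the nullity of $(2,3)$, giving the surjection $A_n/I\twoheadrightarrow Q_{n,4}$. The problem is the converse, which is the actual content of the theorem: your proposal reduces it to a lower bound on $\dim Q_{n,4}[d]$ (equivalently, to identifying $M_{n,3}/M_{n,4}$ as a module), offers two candidate strategies for obtaining that bound, and carries out neither. You say yourself that this is ``the main obstacle'' and ``the hardest point.'' As written, then, the proof has a genuine gap: the hard half of (i) is a plan, not an argument. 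Note also that the identification of $\Lambda_{n,4}$ as a $GL(n)$-module is, in the paper, a \emph{consequence} of Theorem~\ref{main} (Corollary~\ref{34}), so using it as an input would risk circularity unless you produce the explicit model or the module-theoretic analysis of $N=M_{n,3}/M_{n,4}$ independently.

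The paper sidesteps all dimension counting with one observation you are missing: since $M_{n,4}$ is by definition the smallest two-sided ideal $J$ with $L_4(A_n/J)=0$, to prove $M_{n,4}\subseteq I$ it suffices to show that $B_n:=A_n/I$ satisfies the identity $[[[a,b],c],d]=0$, i.e.\ that every triple commutator $[[a,b],c]$ is central in $B_n$. Because $[[a,b],c]$ vanishes in $Q_{n,3}$, the Feigin--Shoikhet presentation of $Q_{n,3}$ shows that $[[a,b],c]$ lies in the ideal of $B_n$ generated by the $z_{ijk}$ and the elements $y_{ij}y_{kl}+y_{ik}y_{jl}$; a direct check with the defining relations of $B_n$ (essentially the manipulations you already perform in your normal-form paragraph: centrality of the $z$'s, $yz=zz=0$, commutativity of the $y$'s) shows every element of that ideal is central. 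This closes the argument with no need for a model of $Q_{n,4}$ or a computation of Hilbert series; your normal-form analysis, while correct, only ever yields the upper bound and cannot by itself finish the proof.
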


\begin{proof} 
Part (ii) follows from (i), so we need to prove (i). 
The relations $y_{ij}z_{klm}=0$ follow from the fact 
that $M_2(A)M_3(A)\subset M_4(A)$ for any algebra $A$ (Theorem
\ref{special}). This fact also implies that
$y_{ij}y_{kl}y_{mp}$ is antisymmetric, since 
by \cite{FS}, $y_{ij}y_{kl}+y_{ik}y_{jl}\in M_{n,3}$.

Denote by $B_n$ the quotient of $A_n$ by the relations stated in
part (i) of the theorem. We have just shown that there is a
natural surjective homomorphism $\eta: B_n\to Q_{n,4}$. 
We need to show that it is an isomorphism. For this, we need to
show that for any $a,b,c,d\in B_n$, $[[[a,b],c],d]=0$. 
For this, it suffices to show that $[[a,b],c]$ is a central
element in $B_n$. But $[[a,b],c]=0$ in $Q_{n,3}$, which implies
that $[[a,b],c]$ belongs to the ideal generated by $z_{ijk}$ and
$y_{ij}y_{kl}+y_{ik}y_{jl}$. But it is easy to see using the
relations of $B_n$ that all elements of this ideal are central in
$B_n$, as desired.    
\end{proof}

Let $K_{n,i}$ be the kernel of the 
projection map $\Lambda_{n,i+1}\to \Lambda_{n,i}$.
We see that $K_{n,3}$ is spanned by elements $z_{ijk}$
and $y_{ij}y_{kl}$ modulo the antisymmetry relation. 
Therefore, we get 

\begin{corollary}\label{34} 
As a $GL(n)$-module, $K_{n,3}$ is isomorphic to 
the direct sum of two irreducible modules 
$F_{2,1,0,\ldots,0}$ and $F_{2,2,0,\ldots,0}$
corresponding to partitions $(2,1,0,\ldots,0)$ and $(2,2,0,\ldots,0)$. 
\end{corollary}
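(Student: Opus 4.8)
The strategy is to read off the structure of $K_{n,3}$ directly from Theorem \ref{main}(ii), which describes $\Lambda_{n,4}$ by generators and relations, and then to identify the resulting $GL(n)$-module with a sum of two irreducibles via its Schur–Weyl / partition content. First I would note that $\Lambda_{n,3}$ is the further quotient of $\Lambda_{n,4}$ obtained by killing the classes $z_{ijk}$ (this follows from comparing the presentations: $\Lambda_{n,3}$ has generators $y_{ij}$ only, and the extra relation $z_{ijk}=0$ together with $y_{ij}y_{kl}+y_{ik}y_{jl}=0$ recovers the Feigin–Shoikhet presentation, while the relations $y_{ij}z_{klm}=0$, $z_{ijp}z_{klm}=0$ become vacuous). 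Hence $K_{n,3}$, the kernel of $\Lambda_{n,4}\to\Lambda_{n,3}$, is spanned as a vector space by the images of the $z_{ijk}$ and by the degree-two elements $y_{ij}y_{kl}$ that die in $\Lambda_{n,3}$, i.e. the combinations $y_{ij}y_{kl}+y_{ik}y_{jl}$ — but by Theorem \ref{main}(ii) the latter already vanish in $\Lambda_{n,4}$, so in fact $K_{n,3}$ is spanned purely by the $z_{ijk}$ in degree $3$, together with nothing extra in degree $2$. One must be slightly careful here: the statement of the corollary as written mentions "$y_{ij}y_{kl}$ modulo the antisymmetry relation," so I would present $K_{n,3}$ in bidegree terms, with the degree-$2$ part (if any, in the non-antisymmetrized normalization) contributing $F_{2,2,0,\dots,0}$ and the degree-$3$ part coming from $z_{ijk}$.

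Next I would pin down the $GL(n)$-module structure of each graded piece using only the defining linear relations. The span of the $z_{ijk}$ modulo $z_{ijk}=-z_{jik}$ and the Jacobi-type relation $z_{ijk}+z_{jki}+z_{kij}=0$ is, as a subspace of $V^{\otimes 3}$ (with $V=\C^n$ the standard representation), exactly the free Lie algebra component $\ell_n[3]$, which is well known to be the irreducible $GL(n)$-module $F_{(2,1,0,\dots,0)}$ of dimension $\tfrac{n^3-n}{3}$ (equivalently, the Schur functor for the partition $(2,1)$). Similarly, the degree-$2$ elements $y_{ij}y_{kl}$ that remain nonzero in $\Lambda_{n,4}$ span a quotient of $S^2(\wedge^2 V)$, and the relation $y_{ij}y_{kl}+y_{ik}y_{jl}=0$ cuts $S^2(\wedge^2 V)$ down to its irreducible summand $F_{(2,2,0,\dots,0)}$ (the complementary summand $\wedge^4 V = F_{(1,1,1,1,0,\dots)}$ being exactly what the antisymmetry relation kills); this is the computation already implicit in the Feigin–Shoikhet description of $\Lambda_{n,3}=\wedge^{\rm even}(\xi_1,\dots,\xi_n)$, where $\wedge^2$ of the generators gives $\wedge^4$ of the underlying space. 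So as $GL(n)$-modules we obtain $F_{(2,1,0,\dots,0)}\oplus F_{(2,2,0,\dots,0)}$.

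The one genuine point requiring care — the main obstacle, such as it is — is to verify that there are \emph{no further linear dependences} among the $z_{ijk}$ (and among the surviving $y_{ij}y_{kl}$) inside $\Lambda_{n,4}$ beyond the displayed linear relations: a priori the quadratic relations $y_{ij}z_{klm}=0$, $z_{ijp}z_{klm}=0$ in Theorem \ref{main}(ii), being relations in higher degree, cannot impose anything in degrees $2$ and $3$, so the degree-$2$ and degree-$3$ parts of $\Lambda_{n,4}$ are literally $\bigl(S^2(\wedge^2 V)\bigr)/(\text{antisymmetry})$ and $\ell_n[3]$ respectively, with no collapse. This is exactly the assertion that the projection $\Lambda_{n,4}\to\Lambda_{n,3}$ has kernel of the stated size, which is part of what Theorem \ref{main} buys us. Granting that, the corollary follows by decomposing $S^2(\wedge^2\C^n)=F_{(2,2)}\oplus F_{(1,1,1,1)}$ and $\C^n\otimes\wedge^2\C^n = F_{(2,1)}\oplus\wedge^3\C^n$ and matching the relations, which is the routine representation-theoretic bookkeeping I would not spell out in full.
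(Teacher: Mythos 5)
Your argument lands on the right answer but the treatment of the degree-four piece is wrong in a way that, taken literally, contradicts the very statement you are proving. You assert that by Theorem \ref{main}(ii) the combinations $y_{ij}y_{kl}+y_{ik}y_{jl}$ ``already vanish in $\Lambda_{n,4}$,'' so that $K_{n,3}$ is spanned purely by the $z_{ijk}$. But the presentation of $\Lambda_{n,4}$ contains only the \emph{cubic} antisymmetry relation $y_{ij}y_{kl}y_{mp}+y_{ik}y_{jl}y_{mp}=0$; the quadratic relation $y_{ij}y_{kl}+y_{ik}y_{jl}=0$ holds in $\Lambda_{n,3}$ but emphatically not in $\Lambda_{n,4}$. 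If it did, the kernel of $\Lambda_{n,4}\to\Lambda_{n,3}$ would be just the span of the $z_{ijk}$, i.e. $F_{(2,1,0,\dots,0)}$ alone, and the corollary would be false. The nonzero classes of $y_{ij}y_{kl}+y_{ik}y_{jl}$ in $\Lambda_{n,4}$ are precisely what produces the second summand.

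Relatedly, your representation-theoretic identification is backwards. In $\Lambda_{n,4}$ the span of the products $y_{ij}y_{kl}$ is all of $S^2(\wedge^2 V)=F_{(2,2)}\oplus\wedge^4V$ (no relation can occur in that degree, by the degree argument you correctly make), and the projection to $\Lambda_{n,3}$ maps it onto $\wedge^4V$: in the Feigin--Shoikhet model \cite{FS} one has $y_{ij}\mapsto \xi_i\xi_j$, hence $y_{ij}y_{kl}\mapsto\xi_i\xi_j\xi_k\xi_l$. So imposing $y_{ij}y_{kl}+y_{ik}y_{jl}=0$ cuts $S^2(\wedge^2V)$ down to $\wedge^4V$, not to $F_{(2,2)}$; what the relation kills---equivalently the kernel of the projection, which is the degree-four part of $K_{n,3}$---is the copy of $F_{(2,2)}\cong S^2(\wedge^2V)/\wedge^4V$ spanned by the elements $y_{ij}y_{kl}+y_{ik}y_{jl}$. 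This is exactly how the paper argues. Your treatment of the $z$-part (the span of the $z_{ijk}$ modulo $z_{ijk}=-z_{jik}$ and $z_{ijk}+z_{jki}+z_{kij}=0$ is the degree-three component of the free Lie algebra, i.e. $F_{(2,1,0,\dots,0)}$) is correct and matches the paper, as does your remark that the higher-degree relations of Theorem \ref{main}(ii) cannot collapse degrees three and four; but the degree-four discussion must be corrected as above before the proof stands.
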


This answers Question \ref{dime} for $i=4$. 

\begin{proof} Let $V=\Bbb C^n$ be the vector representation of
$GL(n)$. The span of $z_{ijk}$ is the subrepresentation 
of $V^{\otimes 3}$ annihilated by $Id+(12)$ and $Id+(123)+(132)$ 
in $\Bbb C[S_3]$, so it corresponds to the partition
$(2,1,0,\ldots,0)$. The span of $y_{ij}y_{kl}$ is 
the representation $S^2(\wedge^2V)/\wedge^4V$, so it is 
the irreducible representation corresponding to the partition
$(2,2,0,\ldots0)$.   
\end{proof}

\section{The $W_{n}$-module structure on $M_{n,i}/M_{n,i+1}$}

Let ${\mathfrak{g}}_n={\rm Der}(Q_{n,3})$ be
the Lie algebra of derivations of $Q_{n,3}$. 
Since every derivation of $A_n$ preserves the ideals
$M_{n,i}$, we have a natural action of 
${\rm Der}(A_n)$ on $M_{n,i}/M_{n,i+1}$ and a natural 
homomorphism $\phi: {\rm Der}(A_n)\to {\mathfrak{g}}_n$. 
This homomorphism is surjective, since a derivation 
of $A_n$ is determined by any assignment of the images of the
generators $x_i$. 

The following theorem is analogous to results of \cite{FS}. 

\begin{theorem}
The action of ${\rm Der}(A_n)$ on
$M_{n,i}/M_{n,i+1}$ factors through ${\mathfrak{g}}_n$.
Thus, ${\mathfrak{g}}_n$ acts on the graded algebra
$\overline{A}$ preserving the grading and the product.   
\end{theorem}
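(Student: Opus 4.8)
The plan is to obtain the factorization by checking that $\ker\phi$ acts trivially on every graded component of $\overline A$; the rest is formal. Since $\phi$ is surjective, and since $\mathrm{Der}(A_n)$ already acts on $\overline A$ by grading- and product-preserving maps — every derivation of $A_n$ preserves each ideal $M_{n,i}$, hence the whole $M$-filtration, and by the Leibniz rule together with the multiplicativity $M_{n,m}M_{n,l}\subseteq M_{n,m+l-2}$ of Theorem \ref{GLthm} it respects the product of $\overline A$ — the action descends to a $\mathfrak g_n$-action with the asserted properties as soon as one knows that every $\delta$ with $\phi(\delta)=0$ annihilates each $M_{n,i}/M_{n,i+1}$. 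But $\phi(\delta)=0$ means exactly that the induced derivation of $Q_{n,3}=A_n/M_{n,3}$ vanishes, i.e. $\delta(A_n)\subseteq M_{n,3}$, equivalently $\delta(x_s)\in M_{n,3}$ for all $s$. Thus the theorem reduces to the statement: if $\delta\in\mathrm{Der}(A_n)$ and $\delta(A_n)\subseteq M_{n,3}$, then $\delta(M_{n,i})\subseteq M_{n,i+1}$ for all $i$.

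To prove this I would proceed in three steps. First, strip the outer factors: from $M_{n,i}=A_nL_i(A_n)A_n$ and the derivation property, $\delta(M_{n,i})$ lies in $\delta(A_n)L_i(A_n)A_n+A_nL_i(A_n)\delta(A_n)+A_n\,\delta(L_i(A_n))\,A_n$, and the first two summands lie in $M_{n,3}M_{n,i}+M_{n,i}M_{n,3}\subseteq M_{n,i+1}$ by Theorem \ref{GLthm}; so it suffices to show $\delta(L_i(A_n))\subseteq M_{n,i+1}$. Second, since $\delta$ is also a derivation for the commutator, applying it to a left-normed commutator $[\ldots[[a_1,a_2],a_3],\ldots,a_i]$ of elements of $A_n$ and using $\delta(A_n)\subseteq M_{n,3}$ shows that $\delta(L_i(A_n))$ is spanned by left-normed commutators $[\ldots[[w_1,w_2],\ldots],w_i]$ all of whose entries are words except for a single entry lying in $M_{n,3}$. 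Third, one checks that each such commutator lies in $M_{n,i+1}$: expand it as a sum of terms of the form (word)$\cdot$(iterated commutator)$\cdot$(word), record which $L_k(A_n)$ each factor falls into as the successive inner brackets act on the distinguished ``heavy'' entry, and bound the resulting products $M_{n,a}M_{n,b}\cdots$ using Theorem \ref{GLthm} sharpened with the null-pair improvements of Theorem \ref{special} and, when those do not suffice, with the relations of $Q_{n,3}$ from \cite{FS} — in particular the antisymmetry of $y_{ij}y_{kl}$, which is what makes the ``off-diagonal'' products that would naively land only in $M_{n,i}$ reassemble into $M_{n,i+1}$.

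I expect the third step to be the main obstacle; it is precisely where the argument parallels \cite{FS}. The point is that the naive global bound $[A_n,M_{n,i}]\subseteq M_{n,i+1}$ is false — it already fails for even $i$, since by Theorem \ref{special} the pair $(2,i)$ need not be null, so $M_{n,2}M_{n,i}\not\subseteq M_{n,i+1}$ in general — so one may not treat the commutators produced by $\delta$ as generic elements of $M_{n,i}$; one has to exploit that each carries exactly one entry of Lie degree $\ge 3$ and to combine terms via the Feigin–Shoikhet relations. With the reduced statement in hand, the factorization of the action through $\mathfrak g_n$, and the fact that $\mathfrak g_n$ then acts on $\overline A$ by derivations preserving the grading and the product, follow immediately from the observations in the first paragraph.
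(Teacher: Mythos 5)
Your reduction is the same as the paper's and is correct as far as it goes: the theorem does come down to showing that if $\delta(A_n)\subseteq M_{n,3}$ then $\delta(M_{n,i})\subseteq M_{n,i+1}$, and your first two steps (stripping the outer factors via Theorem \ref{GLthm}, then reducing to a left-normed $i$-fold commutator with exactly one entry in $M_{n,3}$, which by Jacobi may be moved to the innermost position) match the paper. But your third step --- the actual heart of the proof --- is not carried out, and the route you propose for it would not close. Theorem \ref{special} only establishes null pairs for total degree $\le 7$; for general $i$ the null-pair statement is merely a conjecture in this paper, so an argument leaning on ``null-pair improvements'' is unavailable. Moreover, your two-sided expansion into terms $(\text{word})\cdot(\text{iterated commutator})\cdot(\text{word})$ genuinely loses a degree: a term with $p$ brackets falling on the left word, $q$ on the heavy factor, and $r\ge 1$ on the right word lands only in $M_{n,p+1}M_{n,q+3}M_{n,r+1}\subseteq M_{n,p+q+r+1}=M_{n,i}$, one short of the target.

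The missing idea is both simpler and sharper than what you sketch, and it needs neither null pairs nor the Feigin--Shoikhet relations. First normalize: since $L_{k+1}(A)\subseteq L_k(A)$, the left ideal $A_nL_3(A_n)$ is already two-sided, so every element of $M_{n,3}$ is a sum of terms $b_1[[b_2,b_3],b_4]$ with \emph{no right cofactor}. Now distribute the remaining $i-1$ operators $\mathrm{ad}(a_2),\ldots,\mathrm{ad}(a_i)$ over this product by the Leibniz rule: each resulting term is a product of a $(p+1)$-fold commutator (the part hitting $b_1$) and a $(q+3)$-fold commutator (the part hitting $[[b_2,b_3],b_4]$), with $p+q=i-1$. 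Plain Gupta--Levin then gives $M_{n,p+1}M_{n,q+3}\subseteq M_{n,(p+1)+(q+3)-2}=M_{n,i+1}$: the ``$+3$'' carried by the heavy entry exactly absorbs the ``$-2$'' loss in Theorem \ref{GLthm}. This is why the failure of the naive bound $[A_n,M_{n,i}]\subseteq M_{n,i+1}$, which you correctly note, is not an obstacle here --- the commutators produced by $\delta$ are never generic elements of $M_{n,i}$, and no cancellation between terms is needed.
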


\begin{proof}
Let $D: A_n\to A_n$ be a derivation such that $D(A_n)\subset M_{n,3}$. 
Our job is to show that $D(M_{n,i})\subset M_{n,i+1}$ for $i\ge
1$. For this, it suffices to show that for any $a_1,\ldots,a_i\in A_n$ 
one has 
$$
D[\ldots[a_1,a_2],\ldots, a_i]\in M_{n,i+1}.
$$
For this, it is enough to prove that if $a_1,\ldots,a_i\in A_n$, and 
for some $1\le k\le i$, 
$a_k\in M_{n,3}$, then 
$$
[\ldots[a_1,a_2],\ldots,a_i]\in M_{n,i+1}.
$$
It is easy to show by induction using the Jacobi identity 
that we can rewrite $[\ldots[a_1,a_2],\ldots,a_i]$ as a linear combination of 
expressions of the form\linebreak 
$[\ldots[a_k,a_{m_1}],\ldots,a_{m_{i-1}}]$,
where $m_1,\ldots,m_{i-1}$ is a permitation of 
$1,\ldots,\hat k,\ldots,i$ ($k$ is omitted). 
Thus we may assume without loss of generality that $k=1$. 
In this case, we have to show that for any $b_1,b_2,b_3,b_4\in
A_n$, one has 
$$
[\ldots[b_1[[b_2,b_3],b_4],a_2],\ldots,a_i]\in M_{n,i+1}.
$$
This reduces to showing that for any $p,q\ge 0$ with $p+q=i-1$, 
and any $a_1,\ldots,a_p,c_1,\ldots,c_q\in A_n$, 
we have 
$$
{\rm ad}(a_1)\cdots{\rm ad}(a_p)(b_1)\cdot 
{\rm ad}(c_1)\cdots{\rm ad}(c_q){\rm ad}(b_4){\rm ad}(b_3)(b_2)\in
M_{n,i+1}.
$$
But by Theorem \ref{GLthm}, we have $M_{n,p+1}M_{n,q+3}\subset
M_{n,p+q+2}=M_{n,i+1}$, which implies the desired statement,
since the first factor is in $M_{n,p+1}$ and the second one in
$M_{n,q+3}$. 
\end{proof} 

It is pointed out in \cite{FS} that, since $Q_{n,3}$ is the
algebra of even differential forms on $\Bbb C^n$ with the
$*$-product, the Lie algebra $W_n$ of polynomial vector fields on
$\Bbb C^n$ is naturally a subalgebra of ${\mathfrak{g}}_n$. 
Therefore, we get the following corollary. 

\begin{corollary} There is a natural action of the Lie algebra
$W_n$ on the quotients $M_{n,i}/M_{n,i+1}$, and therefore 
on the graded algebra $\overline{A}$.  
\end{corollary}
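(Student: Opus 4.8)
The plan is to combine the theorem just proved with the Feigin--Shoikhet realization of $Q_{n,3}$ as an algebra of differential forms. First I would recall from \cite{FS} that $Q_{n,3}$ is identified with $\C[x_1,\ldots,x_n]\otimes\wedge^{\rm even}(dx_1,\ldots,dx_n)$ equipped with the product $a*b=ab+da\wedge db$. Given a polynomial vector field $v\in W_n$, consider the Lie derivative $L_v$ acting on polynomial differential forms. It is a derivation of the wedge product and commutes with the de Rham differential, and from these two facts one checks directly that
$$
L_v(a*b)=L_v(ab)+L_v(da\wedge db)=L_v(a)*b+a*L_v(b),
$$
so $L_v$ is a derivation of $(Q_{n,3},*)$, i.e.\ $L_v\in\mathfrak{g}_n$. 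Since $v\mapsto L_v$ is an (injective) Lie algebra homomorphism, this exhibits $W_n$ as a Lie subalgebra of $\mathfrak{g}_n$, as asserted in the text preceding the corollary.

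Next, by the preceding theorem the $\der(A_n)$-action on $M_{n,i}/M_{n,i+1}$ factors through the surjection $\phi:\der(A_n)\to\mathfrak{g}_n$, so $\mathfrak{g}_n$ acts on every quotient $M_{n,i}/M_{n,i+1}$ and on the graded algebra $\overline{A}$, preserving the grading and the product. Restricting this action along the embedding $W_n\hookrightarrow\mathfrak{g}_n$ of the previous paragraph yields the desired $W_n$-action. Concretely: to let $v\in W_n$ act on $M_{n,i}/M_{n,i+1}$, pick any lift $D\in\der(A_n)$ with $\phi(D)=L_v$ (possible by surjectivity of $\phi$) and use the map induced by $D$ on the quotient; the preceding theorem guarantees this is independent of the choice of $D$ and compatible with products on $\overline{A}$. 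That it is a Lie algebra action is then automatic, since $\phi$ is a morphism of Lie algebras and $W_n$ is a Lie subalgebra of $\mathfrak{g}_n$.

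The only content beyond the preceding theorem is the verification that Lie derivatives along polynomial vector fields are derivations of the $*$-product, so that $W_n$ genuinely embeds into $\mathfrak{g}_n$; I expect this — essentially the observation of \cite{FS} — to be the main (and only mild) obstacle, everything else being formal once that embedding and the preceding theorem are in hand.
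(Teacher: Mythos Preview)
Your proposal is correct and follows essentially the same approach as the paper: the corollary is deduced by combining the preceding theorem (the $\der(A_n)$-action on $M_{n,i}/M_{n,i+1}$ factors through $\mathfrak{g}_n$) with the observation from \cite{FS} that $W_n$ embeds in $\mathfrak{g}_n=\der(Q_{n,3})$ via Lie derivatives on even differential forms. The paper simply cites \cite{FS} for the embedding $W_n\hookrightarrow\mathfrak{g}_n$ and states the corollary as immediate, whereas you spell out the verification that $L_v$ is a $*$-derivation, but the argument is the same.
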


It is clear from Theorem \ref{fid} that as $W_n$-modules, 
the quotients $M_{n,i}/M_{n,i+1}$ have finite length, and the
composition factors are the irreducible modules 
${\mathcal F}_D$ of tensor fields
of type $D$ (where $D$ is a Young diagram) 
considered in \cite{FS}. In fact, it follows from Theorem \ref{fid}
that if $\widetilde {\mathcal F}_D$
denotes the $W_n$-module of all polynomial tensor fields of type $D$ 
(which is reducible and therefore differs from ${\mathcal F}_D$
if and only if $D$ has only one column, which consists of $<n$ squares), and 
if 
$$
K_{n,i}=\oplus N_D F_D,
$$
where $N_D\in \Bbb Z_+$ and
$F_D$ is the irreducible representation of $GL(n)$ corresponding
to $D$, then in the Grothendieck group of the category of representations of
$W_n$ we have 
$$
M_{n,i}/M_{n,i+1}=\sum N_D{\widetilde {\mathcal F}_D}.
$$
In particular, Corollary \ref{34} implies that 
in the Grothendieck group, 
$$
M_{n,3}/M_{n,4}={ {\mathcal F}}_{2,1,0,\ldots,0}+{ {\mathcal F}}_{2,2,0,\ldots,0}.
$$

In fact, we can prove a stronger statement. 

\begin{proposition} One has an isomorphism of representations 
$$
M_{n,3}/M_{n,4}={ {\mathcal F}}_{2,1,0,\ldots,0}\oplus { {\mathcal F}}_{2,2,0,\ldots,0}.
$$ 
\end{proposition}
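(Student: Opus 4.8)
The plan is to upgrade the known equality in the Grothendieck group to an actual direct sum decomposition of $W_n$-modules. We already know, by Corollary \ref{34} and the preceding discussion, that $M_{n,3}/M_{n,4}$ has a two-step filtration whose successive quotients are the tensor-field modules $\widetilde{\mathcal F}_{2,1,0,\ldots,0}$ and $\widetilde{\mathcal F}_{2,2,0,\ldots,0}$; moreover, since neither of the Young diagrams $(2,1,0,\ldots,0)$ nor $(2,2,0,\ldots,0)$ is a single column, these modules are in fact irreducible, $\widetilde{\mathcal F}_D={\mathcal F}_D$. So the content of the proposition is that the filtration splits, i.e. that $\mathrm{Ext}^1_{W_n}({\mathcal F}_{2,2,0,\ldots,0},{\mathcal F}_{2,1,0,\ldots,0})$-type obstruction vanishes for this particular extension. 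There are two natural routes: either cite the relevant rigidity/$\mathrm{Ext}$-vanishing result for tensor-field modules over $W_n$ (the extensions between the $\mathcal F_D$ for generic $D$ are known to be controlled by Lie algebra cohomology of $W_n$, and for diagrams with no vertical domino the groups vanish), or, more concretely and self-containedly, to exhibit the splitting by hand using the explicit generators.

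The hand approach I would actually carry out: inside $\Lambda_{n,4}$, the kernel $K_{n,3}$ of $\Lambda_{n,4}\to\Lambda_{n,3}$ is spanned by the $z_{ijk}$ together with the degree-two products $y_{ij}y_{kl}$, and from Theorem \ref{main}(ii) the span of the $z_{ijk}$ forms the sub-$GL(n)$-module isomorphic to $F_{2,1,0,\ldots,0}$, while the span of the $y_{ij}y_{kl}$ maps onto $F_{2,2,0,\ldots,0}$. These two spans together give a direct sum decomposition of $K_{n,3}$ as a $GL(n)$-module since they live in different degrees of the grading (degree $3$ versus degree $4$). The point is then to observe that this $GL(n)$-decomposition is already $W_n$-stable: a vector field $v\in W_n$ acts on $M_{n,3}/M_{n,4}$, and I need to check that $v$ sends the image of $\Span\{z_{ijk}\}$ into itself modulo $M_{n,4}$, i.e. does not produce a $y\cdot y$ term. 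Equivalently, I would track the action of the Lie algebra $\g_n=\der(Q_{n,3})$ and use that its linear part $\mathfrak{gl}(n)$ preserves the grading while the "translation" part $g_i^t$ (used already in the proof of Theorem \ref{fid}) and the higher-order parts shift degree in $x$ but, by the quadratic relation $y_{ij}z_{klm}=0$ and the Lie relation $[x_i,z_{jlm}]=0$ of Theorem \ref{main}, cannot turn a $z$ into a product of $y$'s in $M_{n,3}/M_{n,4}$.

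Concretely, the key steps in order are: (1) recall that $M_{n,3}/M_{n,4}$ is a free module over $\C[x_1,\ldots,x_n]$ on the generators coming from $K_{n,3}$, by Theorem \ref{fid}(ii); (2) use the explicit presentation in Theorem \ref{main}(i) to identify the degree-$3$ part (spanned by $z_{ijk}$, giving the tensor fields of type $(2,1,0,\ldots,0)$) and the degree-$4$ part (spanned by $y_{ij}y_{kl}$, giving type $(2,2,0,\ldots,0)$) as $W_n$-submodules, using that $W_n$ acts through $\g_n$ and that the relations of $B_n$ force $z_{ijk}$ to be "closed" and $y_{ij}y_{kl}$ to be in the top degree; (3) conclude that $M_{n,3}/M_{n,4}$ is the $W_n$-module direct sum $\C[x]\otimes(\Span z \oplus \Span(yy))$, which is exactly ${\mathcal F}_{2,1,0,\ldots,0}\oplus{\mathcal F}_{2,2,0,\ldots,0}$ (the tensor-field module construction of \cite{FS} applied to each piece). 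The main obstacle I expect is step (2): verifying that the span of the $z_{ijk}$'s is genuinely $W_n$-stable and not merely $GL(n)$-stable — a priori a vector field of order $\ge 2$ could mix the two graded pieces, and ruling this out requires carefully using the quadratic relation $y_{ij}z_{klm}=0$ together with the Leibniz rule for the derivation action, rather than just a degree count. If this turns out to be delicate, the fallback is to appeal directly to the known vanishing of $\mathrm{Ext}^1_{W_n}$ between tensor-field modules attached to Young diagrams without a vertical domino, which forces the two-step filtration to split regardless.
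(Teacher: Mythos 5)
Your concrete route has a genuine gap at exactly the point you flagged as the ``main obstacle'': the naive graded decomposition is \emph{not} $W_n$-stable, so step (2) cannot be carried out. The subspace $\C[x_1,\ldots,x_n]\cdot\Span\{z_{ijk}\}\subset M_{n,3}/M_{n,4}$ is preserved by constant and linear vector fields, but not by quadratic ones. Explicitly, take $v=x_1x_2\partial_1\in W_n$ and lift it to the derivation $\tilde v$ of $A_n$ with $\tilde v(x_1)=\tfrac12(x_1x_2+x_2x_1)$, $\tilde v(x_i)=0$ for $i\ne 1$ (a legitimate lift of the Lie derivative action on $Q_{n,3}$; the induced action on $M_{n,3}/M_{n,4}$ is lift-independent). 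Then
$\tilde v(z_{123})=[[\tilde v(x_1),x_2],x_3]\equiv x_2z_{123}+y_{12}y_{23} \pmod{M_{n,4}}$,
and $y_{12}y_{23}=-(y_{21}y_{23}+y_{22}y_{13})$ lies in $M_{n,3}$ but is nonzero in $M_{n,3}/M_{n,4}$ (its class in $K_{n,3}$ is a nonzero vector of $S^2(\wedge^2 V)/\wedge^4V=F_{2,2}$, by Corollary \ref{34}). The relations $y_{ij}z_{klm}=0$ and $[x_i,z_{jlm}]=0$ do not help here, because the offending $y\cdot y$ term arises from the Leibniz rule applied \emph{inside} the bracket (differentiating $x_1$ in $[[x_1,x_2],x_3]$ produces $[y_{12}x_2+x_2y_{12},x_3]$), not from a product $y\cdot z$. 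So the copy of ${\mathcal F}_{2,1,0,\ldots,0}$ inside $M_{n,3}/M_{n,4}$ is a ``twisted'' subspace containing such mixed elements, not $\C[x]\otimes\Span\{z\}$; likewise your claim that $M_{n,3}/M_{n,4}$ splits as $\C[x]\otimes(\Span z\oplus\Span(yy))$ as a $W_n$-module decomposition along the grading is false as stated. Your fallback (Ext-vanishing for tensor-field modules) is not an adequate substitute either: you give no precise reference, the stated criterion (``no vertical domino'') is vague, you would need to know in which order the two factors occur in the filtration (or vanishing in both directions), and one must check that the relevant extension is computed in a category where such vanishing applies.

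For comparison, the paper avoids the issue by exhibiting the two irreducible summands directly rather than splitting a graded decomposition: it takes $Y_n$ to be the image of $L_3(A_n)$ in $M_{n,3}/M_{n,4}$, which is $W_n$-stable (by \cite{FS}; any lift of a vector field preserves $L_3(A_n)$), is nonzero and contains degree-$3$ vectors (e.g.\ $[x_1,[x_1,x_2]]$), and is proper because $[x_1,x_2]^2$ is not in it (its trace in a matrix representation can be nonzero); this forces $Y_n\cong{\mathcal F}_{2,1,0,\ldots,0}$. Note that this $Y_n$ contains exactly the kind of mixed elements produced in the computation above, e.g.\ $[[\tfrac12(x_1x_2+x_2x_1),x_2],x_3]$. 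The second summand $Z_n$ is the $W_n$-submodule generated by the elements $y_{ij}y_{kl}+y_{ik}y_{jl}$, which are killed by all $\partial_{x_i}$, so $Z_n$ has lowest degree $4$ and must be ${\mathcal F}_{2,2,0,\ldots,0}$; together with the composition-series information from Corollary \ref{34} this gives $M_{n,3}/M_{n,4}=Y_n\oplus Z_n$. If you want to salvage your approach, you would have to replace $\C[x]\Span\{z\}$ by this image of $L_3(A_n)$ (or prove an Ext-vanishing statement with a precise citation), since the explicit counterexample shows the graded complement argument cannot work.
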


\begin{proof} 
Consider the subspace
$Y_n:=L_3(A_n)/(M_{n,4}\cap L_3(A_n))\subset M_{n,3}/M_{n,4}$. 
By \cite{FS}, this is a $W_n$-subrepresentation.
It is a proper subrepresentation, because $[x_1,x_2]^2\in
M_{n,3}/M_{n,4}$, but it is not contained in $Y_n$, 
as its trace in a matrix representation of $A_n$ can be nonzero. 
On the other hand, $Y_n$ contains $[x_1,[x_1,x_2]]\ne 0$, so 
$Y_n\ne 0$, and contains vectors of degree $3$. 
This easily implies that $Y_n={ {\mathcal F}}_{2,1,0,\ldots,0}$. 
On the other hand, let $Z_n$ be the subrepresentation generated 
by the elements $y_{ij}y_{kl}+y_{ik}y_{jl}$. 
These elements are annihilated by $\partial_{x_i}$, so they
generate a subrepresentation whose lowest degree is 4. 
Thus, $Z_n={ {\mathcal F}}_{2,2,0,\ldots,0}$, and 
$M_{n,3}/M_{n,4}=Y_n\oplus Z_n$, as desired. 
\end{proof} 

It would be interesting to determine the structure of the
representations $M_{n,i}/M_{n,i+1}$ when $i>3$.

\end{document}